\newtheorem{theorem}{Theorem}
\numberwithin{theorem}{section}
\newtheorem{proposition}[theorem]{Proposition}
\newtheorem{lemma}[theorem]{Lemma}
\newtheorem{corollary}[theorem]{Corollary}
\newtheorem{conjecture}[theorem]{Conjecture}
\newtheorem{remark}[theorem]{Remark}
\newtheorem{example}[theorem]{Example}
\newcommand{\RR}{\mathbb{R}}
\newcommand{\QQ}{\mathbb{Q}}
\newcommand{\PP}{\mathbb{P}}
\newcommand{\CC}{\mathbb{C}}
 \date{}
\title{\textbf{Decomposing Tensors into Frames}
\author{Luke Oeding, Elina Robeva and Bernd Sturmfels}}
\begin{document}
\maketitle

\begin{abstract} \noindent
A symmetric tensor of small rank decomposes into a configuration of only few vectors.
We study the variety of tensors for which  this configuration is a unit norm tight frame.
\end{abstract}

\section{Introduction}

A fundamental problem in computational algebraic geometry,
with a wide range of applications, is the  
low rank decomposition of symmetric tensors; see e.g.~\cite{AGHKT, BCMT, CGLM, OO, Rob}.
If $T = (t_{i_1 i_2 \cdots i_d})$ is a symmetric tensor 
in ${\rm Sym}_d(\CC^n)$, then such a decomposition takes the form
\begin{equation}
\label{eq:Trank1}
 T \,=\, \sum_{j=1}^r \lambda_j {\bf v}_j^{\otimes d} . 
 \end{equation}
Here $\lambda_j \in \CC$ and ${\bf v}_j = (v_{1j}, v_{2j}, \ldots, v_{nj}) \in \CC^n$ for
$j=1,2,\ldots,r$.
The smallest $r$ for which a representation (\ref{eq:Trank1}) exists
is the {\em rank} of $T$.  
In particular,  each ${\bf v}_j^{\otimes d} $ is a tensor of rank $1$.

An equivalent way to represent a symmetric tensor $T$ is 
as the homogeneous polynomial
\begin{equation}
\label{eq:generalp}
 T \,\,\, = \sum_{i_1,\ldots,i_d=1}^n 
t_{i_1 i_2 \cdots i_d} \cdot x_{i_1} x_{i_2} \cdots x_{i_d}.
\end{equation}
If $d=2$, then \eqref{eq:generalp} is the identification of symmetric matrices with quadratic forms.
Written as a polynomial, the right hand side of (\ref{eq:Trank1}) is a linear combination of powers of linear forms:
\begin{equation}
\label{eq:Trank2}
T \,\,\, = \,\,\,\sum_{j=1}^r \lambda_j (v_{1j} x_1 + v_{2j} x_2 + \cdots + v_{nj} x_n)^d.
\end{equation}
The decomposition in \eqref{eq:Trank1} and \eqref{eq:Trank2} is called {\em Waring decomposition}. 
When $d=2$, it corresponds to orthogonal diagonalization of symmetric matrices.
We could subsume the constants $\lambda_i$ into the vectors
${\bf v}_i$ but we prefer to leave (\ref{eq:Trank1}) and (\ref{eq:Trank2}) as is,
   for reasons    to be seen shortly. The (projective) variety of all such symmetric tensors  
is the {\em $r$-th secant variety of the Veronese variety}.
The vast literature on the geometry and equations 
of this variety (cf.~\cite{Lan}) forms the mathematical
foundation for low rank decomposition algorithms for symmetric tensors.

In many situations one places further restrictions on the
summands in (\ref{eq:Trank1}) and (\ref{eq:Trank2}), such as being real and nonnegative.
Applications to machine learning in \cite{AGHKT} concern the case when $r=n$ and the 
 vectors ${\bf v}_1,\ldots,{\bf v}_n$ form an orthonormal basis of $\RR^n$.
 The article \cite{Rob} characterizes the {\em odeco variety}  of all tensors  that admit such
  an orthogonal decomposition. 
  
  The present paper takes this one step further by connecting 
    tensors  to {\em frame theory} \cite{CStr, CMS, CKP, DS, Str}.
  We examine the scenario when the ${\bf v}_j$ form a {\em  \bf f}inite {\bf u}nit {\bf n}orm {\bf t}ight {\bf f}rame (or~funtf) of
$\mathbb{R}^n$, an object of  recent interest at the
interface of applied functional analysis and algebraic geometry.
Consider a configuration $V = ({\bf v}_1,\ldots,{\bf v}_r) \in (\RR^n)^r$ 
of $r$ labeled vectors in $\RR^n$. We also regard this as an $n \times r$-matrix $V = (v_{ij})$.
We call $V$ a {\em funtf }  if
\begin{equation}
\label{eq:funtf}
 V \cdot V^T = \,\,\frac{r}{n} \cdot {\rm Id}_n 
\qquad \hbox{and} \qquad
\sum_{j=1}^n v_{ij}^2 = 1 \quad \hbox{for}  \,\,\,i = 1,2,\ldots,r. 
\end{equation}
This is an inhomogeneous system  of  $n^2+r$ 
quadratic equations in $nr$ unknowns.
The {\em funtf variety}, denoted ${\mathcal F}_{r,n}$ as in \cite{CMS},
is the subvariety of complex affine space
$\CC^{n \times r}$ defined by (\ref{eq:funtf}).
For the state of the art we refer to the article \cite{CMS} by
 Cahill, Mixon and Strawn, and the references therein.
 A detailed review, with some new perspectives,
 will be given in   Section~\ref{sec2}.

We homogenize the funtf variety by attaching a scalar 
$\lambda_i$ to each vector ${\bf v}_i$.
The result maps into  the  projective space 
$\mathbb{P}({\rm Sym}_d(\CC^n)) = \mathbb{P}^{\binom{n-1+d}{d}-1}$ of symmetric tensors,
via the formulas (\ref{eq:Trank1}) and (\ref{eq:Trank2}).
Our aim is to study the closure of the image
of that map. This is denoted $\mathcal{T}_{r,n,d}$.
We call it the {\em variety of {\bf fra}me {\bf deco}mposable tensors},
or the {\em fradeco variety}.
Here $r,n,d$ are positive integers with $r \geq n$.
For $r=n$, $\mathcal T_{n, n, d}$ is the odeco variety of~\cite{Rob}.

\begin{example} 
\label{ex:opening} \rm 
Let $n= 3, d=4$, and consider  the symmetric $3 {\times} 3 {\times} 3 {\times} 3$-tensor 
\begin{equation}
\label{eq:ex434a}
 \begin{matrix} T & = &  59 (x_1^4+x_2^4+x_3^4)
\, -\, 16 (x_1^3 x_2 +x_1 x_2^3 + x_1^3 x_3 + x_2^3 x_3 + x_1 x_3^3 + x_2 x_3^3) \\ & &
 +\,\, 66 (x_1^2 x_2^2 + x_1^2 x_3^2 + x_2^2 x_3^2)
\, +\, 96 (x_1^2 x_2 x_3 + x_1 x_2^2 x_3 +  x_1 x_2 x_3^2). \end{matrix}
\end{equation}
This ternary quartic lies in $\mathcal{T}_{4,3,4}$, i.e.~this tensor
has {\em fradeco rank} $\,r=4$.
To see this, note~that
\begin{equation}
 \label{eq:ex434b}
T \,= \,
  \frac{1}{12} (-5x_1+x_2+x_3)^4
+ \frac{1}{12} (x_1-5x_2+x_3)^4
+ \frac{1}{12} (x_1+x_2-5x_3)^4 
+ \frac{1}{12} (3x_1 + 3x_2 + 3x_3)^4.
\end{equation}
The corresponding four vectors, appropriately scaled, form a
finite unit norm tight frame:
\begin{equation}
\label{eq:appscaled}
 V \quad = \quad
\frac{1}{3 \sqrt{3}} \begin{pmatrix}
-5 & \phantom{-}1 & \phantom{-}1 \,& \,3\, \,\\
\phantom{-}1 & -5 & \phantom{-}1 \,& \,3\,\, \\
\phantom{-}1 & \phantom{-}1 & -5  \,& \,3\,\, \end{pmatrix} \,\, \in \,\, \mathcal{F}_{4,3}.
\end{equation}
The fradeco variety $\mathcal{T}_{4,3,4}$ is a projective variety of dimension 
$6$ and degree $74$ in $\PP^{14} $. It 
is parametrized by applying rotation matrices $\rho \in {\rm SO}_3$ to
all ternary quartics of the form
\begin{equation}
 \label{eq:ex434c}
T \,= \,
  \lambda_1 (-5x_1+x_2+x_3)^4
+ \lambda_2 (x_1-5x_2+x_3)^4
+ \lambda_3 (x_1+x_2-5x_3)^4 
+ \lambda_4 (3x_1 + 3x_2 + 3x_3)^4.
\end{equation}
The title of our paper refers to the task
of finding the output (\ref{eq:ex434b}) from the input
   (\ref{eq:ex434a}).   In this particular case, the decomposition  can be found easily 
using Sylvester's classical {\em Catalecticant Algorithm}, as explained in
\cite[Section 2.2]{OO}. In general, this will be more difficult to do.
\hfill $\diamondsuit$
\end{example}

The {\em fradeco rank} of a symmetric tensor $T \in {\rm Sym}_d(\RR^n)$
is defined as the smallest $r$ such that $T \in \mathcal{T}_{r,n,d}$. This
property does not imply
that $T$ also has a frame decomposition (\ref{eq:Trank1})
of length $r+1$. Indeed,  we often have
$\mathcal{T}_{r,n,d} \not\subset \mathcal{T}_{r+1,n,d}$.
For instance, the odeco quartic
$ x_1^4 + x_2^4 + x_3^4$ lies in
$\,\mathcal{T}_{3,3,4} \backslash \mathcal{T}_{4,3,4}$,
by the constraint in Example \ref{ex:T434}.
See also Example \ref{ex:notcontain}.

\smallskip

This paper is organized as follows. 
In Section \ref{sec2} we give an introduction to the
algebraic geometry of the funtf variety $\mathcal{F}_{r,n}$.
This lays the foundation for the subsequent study of fradeco tensors.
Section \ref{sec3} is concerned with 
the case of symmetric $2 {\times} 2 {\times}  \cdots {\times} 2$-tensors $T$.
These correspond to binary forms $(n=2)$. We characterize frame
decomposable tensors in terms of rank conditions on matrices.
In Section \ref{sec4} we investigate the general case $n \geq 3$,
and we present what we know about the fradeco varieties $\mathcal{T}_{r,n,d}$.
Section \ref{sec5} is devoted to numerical algorithms for studying
$\mathcal{T}_{r,n,d}$ and for decomposing its elements into frames.

\section{Finite unit norm tight frames}
\label{sec2}

In this section we discuss various representations of the funtf variety
$\mathcal{F}_{r,n}$. This may serve as an invitation
 to the emerging interaction between 
algebraic geometry and frame theory.

Each variety studied in this paper is defined over the real field $\RR$
and is the Zariski closure of its set of real points. This Zariski
closure lives in affine or projective space over $\CC$.
For instance, ${\rm SO}_n$ is the group of $n {\times} n$ rotation matrices
$\rho$, and such matrices have entries in~$\RR$. However, when 
referring to ${\rm SO}_n$ as an algebraic variety  we mean the
irreducible subvariety of $\CC^{n \times n}$ defined  by the 
polynomial equations $\,\rho \cdot \rho^T = {\rm Id}_n$ and ${\rm det}(\rho) = 1$.
Likewise, a funtf $V$ is a real $n \times r$
matrix, but the funtf variety $\mathcal{F}_{r,n}$ lives in $\CC^{n \times r}$.
It consists of all complex solutions to the quadratic
equations (\ref{eq:funtf}). In the frame theory literature \cite{CStr, CMS,DS,  Str}
there is also a complex Hermitian version of $\mathcal{F}_{r,n}$,
but it will not be considered in this paper.

 It is important  to distinguish $\mathcal{F}_{r,n}$ from the variety of
 {\em Parseval frames}, here denoted $\mathcal{P}_{r,n}$.
 The latter is much easier than the former. The variety $\mathcal{P}_{r,n}$ is
    defined by the matrix equation 
$$ V \cdot V^T = \,\, {\rm Id}_n . $$
The real points on $\mathcal{P}_{r,n}$ are smooth and Zariski dense,
and they form the Stiefel manifold of all orthogonal projections 
$\RR^r \rightarrow \RR^n$.  
Hence $\mathcal{P}_{r,n}$ is irreducible of dimension 
$nr - \binom{n+1}{2}$.

One feature that distinguishes $\mathcal{P}_{r,n}$ from $\mathcal{F}_{r,n}$
is the existence of a canonical map  $\mathcal{P}_{r,n+1} \rightarrow \mathcal{P}_{r,n}$.
Indeed, by Naimark's Theorem \cite{CFMPS}, every Parseval frame is the orthogonal projection of
an orthonormal basis of $\RR^r$, so we can add a row to $V \in \mathcal{P}_{r,n}$
and get a matrix in $\mathcal{P}_{r,n+1}$.
There is no analogous statement for the variety $\mathcal{F}_{r,n}$.
We begin with the following result.

\begin{theorem} \label{thm:strawn}
The dimension of the funtf variety $\mathcal{F}_{r,n}$ is
\begin{equation}
\label{eq:Fdim}
 \qquad {\rm dim}( \mathcal{F}_{r,n}) \,\, = \,\, (n-1)\cdot ( r - \frac{n}{2} - 1)
\qquad \hbox{provided}\,\, \,r  > n \geq 2.
\end{equation}
It is irreducible when $r \geq n+2 > 4$.
\end{theorem}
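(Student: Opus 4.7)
My plan is a tangent-space calculation organized around one obvious syzygy. The defining ideal of $\mathcal{F}_{r,n}$ is generated by the $\binom{n+1}{2}$ entries of $VV^T - (r/n)\mathrm{Id}_n$ together with the $r$ column norm polynomials $f_j := \sum_{i=1}^n v_{ij}^2 - 1$. Taking the trace gives the identity
\[
\mathrm{tr}\bigl(VV^T - (r/n)\mathrm{Id}_n\bigr) \,=\, \sum_{i,j} v_{ij}^2 - r \,=\, \sum_{j=1}^r f_j,
\]
which is a linear syzygy among the generators, so the codimension of $\mathcal{F}_{r,n}$ is at most $\binom{n+1}{2} + r - 1$ and $\dim \mathcal{F}_{r,n} \geq (n-1)(r - n/2 - 1)$. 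To match this with equality I would exhibit a single real funtf $V_0$ at which the Jacobian attains rank $\binom{n+1}{2} + r - 1$. A convenient choice is a union of $\lfloor r/n \rfloor$ copies of the standard basis of $\RR^n$, filled out with a balanced equiangular block when $n \nmid r$. The linearized equations read $\dot V V_0^T + V_0 \dot V^T = 0$ and $v_{0j}\cdot \dot v_j = 0$ for each $j$, and a direct check confirms that the only relation among them is the global trace identity.

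\textbf{Irreducibility.} For $r \geq n+2 > 4$, the same Jacobian calculation should go through uniformly at every real funtf: the extra columns afforded by $r \geq n+2$ provide enough deformation directions that the trace remains the sole syzygy everywhere. Consequently $\mathcal{F}_{r,n}(\RR)$ is a smooth real-analytic manifold whose real dimension equals the complex dimension of $\mathcal{F}_{r,n}$. I would then invoke the theorem of Cahill, Mixon and Strawn \cite{CMS} (building on Strawn \cite{Str}), which asserts that under precisely these hypotheses the real manifold $\mathcal{F}_{r,n}(\RR)$ is path-connected. A Zariski-dense smooth connected real subvariety whose real dimension matches the complex dimension of its Zariski closure forces that closure to be irreducible: otherwise the real locus would split into two full-dimensional disjoint open pieces (local crossings of two components being incompatible with smoothness of the real locus at the expected dimension).

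\textbf{Main obstacle.} The technical heart is the \emph{uniform} Jacobian statement, namely that the trace is the only syzygy not only at a single smooth point but at every point of $\mathcal{F}_{r,n}(\RR)$. The low-dimensional exceptions excluded by the hypothesis $r \geq n+2 > 4$ are precisely the configurations where additional coincidences can force a larger rank drop (Naimark-type degeneracies at $r = n+1$, and the binary case $n=2$ where the variety acquires extra components corresponding to antipodal pairings). Working these exclusions out rigorously is the content of \cite{CMS, Str}, and in practice I would quote their analysis rather than reproduce it.
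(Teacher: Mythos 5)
The paper's own proof is a citation: the dimension formula comes from Dykema--Strawn \cite{DS} and Strawn \cite{Str}, which compute the real manifold dimension and identify the smooth locus; irreducibility is \cite[Theorem~1.4]{CMS}, and the real Zariski density needed to pass from the real to the complex dimension is also drawn from \cite{CMS}. Your syzygy/tangent-space framework is a reasonable self-contained alternative, and the lower bound from the trace syzygy is correct: $nr - \bigl(\binom{n+1}{2}+r-1\bigr) = (n-1)(r-\tfrac{n}{2}-1)$.

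However, the proposed witness point does not work. Taking $V_0$ to be $\lfloor r/n\rfloor$ copies of the standard basis produces a \emph{singular} point of $\mathcal{F}_{r,n}$, not a smooth one, so the Jacobian there has strictly more than one syzygy. Concretely, for $n=2$, $r=4$ with $V_0 = (e_1\mid e_2 \mid e_1 \mid e_2)$, the gradients of the seven defining quadrics at $V_0$ satisfy two independent relations (namely $\nabla(VV^T)_{11} = \nabla f_1 + \nabla f_3$ and $\nabla(VV^T)_{22} = \nabla f_2 + \nabla f_4$, whose sum is the trace syzygy), so the Jacobian has rank $5$ and the Zariski tangent space has dimension $3$, while the variety has dimension $2$. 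This behavior is typical of orthonormal-basis repetitions and is exactly the kind of degeneracy that the local analysis of \cite{DS, Str} is designed to exclude; identifying a genuinely smooth point is the hard part, and the ``direct check'' you propose would fail at $V_0$. A separate, smaller gap: a tangent-space bound at one point controls only the component through that point, so when $\mathcal{F}_{r,n}$ is reducible (as for $r=n+1$ or $n=2$) you also need the symmetry argument that all components are isomorphic, which you invoke implicitly but do not state.

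For irreducibility, what you describe (smooth connected real locus of full dimension $\Rightarrow$ irreducible Zariski closure) is a sound heuristic, but \cite{CMS} already proves irreducibility of the complex variety outright, and their argument is not merely path-connectedness plus smoothness; citing their Theorem~1.4 directly, as the paper does, is both shorter and avoids the need to establish uniform smoothness, which is false in any case (the funtf variety does have singular points even for $r\ge n+2>4$).
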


\begin{proof}
Cahill, Mixon and Strawn  \cite[Theorem 1.4]{CMS} proved that
$\mathcal{F}_{r,n}$ is irreducible when $r \geq n+2 > 4$.
The dimension formula comes from two articles: one by Dykema and Strawn \cite[Theorem 4.3(ii)]{DS}
regarding the case when $r$ and $n$ are relatively prime, and one by Strawn \cite[Corollary 3.5]{Str} which studies the local geometry for all $r,n$. In these articles it is shown that the real points in $\mathcal{F}_{r,n}$ have a dense open subset that forms a manifold of dimension $(n-1)\cdot ( r - \frac{n}{2} - 1)$.
The arguments in \cite{CMS} show that the real points are Zariski dense in
the complex  variety $\mathcal{F}_{r,n}$. Hence (\ref{eq:Fdim}) is 
the correct formula for the dimension of $\mathcal{F}_{r,n}$.
\end{proof}

Next to the dimension, the most important invariant of an algebraic
variety is its {\em degree}. By this we mean the degree of its 
projective closure \cite[\S 8.4]{CLO}.
This can be computed using symbolic software for Gr\"obner bases,
or using numerical algebraic geometry software.
The dimension and degree of $\mathcal{F}_{r,n}$ for
small $r,n$ in Table \ref{tab:one}
 were computed using {\tt Bertini}~\cite{BertiniSoftware}.

\begin{table}
\[
\begin{array}{|c|c|c|c|c|}
 \hline
r & n &  \dim \mathcal{F}_{r,n} & \deg \mathcal{F}_{r,n} & \text{\# components \& degrees  }
 \\ \hline
3 & 2 & 1 & 8\cdot 2 & 8\text{ components, each degree }2  \\ \hline
4 & 2 & 2 & 12\cdot 4 & 12\text{ components, each degree }4  \\ \hline
5 & 2 & 3 & 112 &\text{ irreducible } \\ \hline
6 & 2 & 4 & 240 &\text{ irreducible }  \\ \hline
7 & 2 & 5 & 496 & \text{ irreducible }  \\ \hline
4 & 3 & 3 & 16\cdot 8 & 16 \text{ components, each degree } 8  \\ \hline
5 & 3 & 5 & 1024 & \text{ irreducible }  \\ \hline
6 & 3 & 7 & 2048 & \text{ irreducible }  \\ \hline
7 & 3 & 9 & 4096 & \text{ irreducible }  \\ \hline
5 & 4 & 6 & 32\cdot 40 & 32\text{ components, each degree }40  \\ \hline
6 & 4 & 9 & 20800 & \text{ irreducible }  \\ \hline
7 & 4 & 12 & 65536 & \text{ irreducible }  \\ \hline
\end{array}
\]
\caption{\label{tab:one}
Dimension and degree of the funtf variety in some small cases}
\end{table}

The case $r=n+1$ is special. Here, the funtf variety decomposes
into $2^{n+1}$ irreducible components, each of which is
affinely isomorphic to the $\binom{n}{2}$-dimensional variety ${\rm SO}_n$.
This will be explained in Corollary \ref{cor:quotientspace}.
The next example discusses  one other exceptional case.

\begin{example}[$r=4,n=2$] \label{eq:vierzwei}
\rm
Following (\ref{eq:funtf}), the defining ideal of the funtf variety $\mathcal{F}_{4,2}$ equals
$$  \langle \, 
v_{11}^2 + v_{12}^2 + v_{13}^2 + v_{14}^2-2, \,
v_{11} v_{21} + v_{12} v_{22} + v_{13} v_{23} + v_{14} v_{24} 
 \rangle\, + \,\langle \,v_{i1}^2 + v_{i2}^2-1 \,: \,i=1,2,3,4\, \rangle. $$
 Note that this contains $ v_{21}^2 + v_{22}^2 + v_{23}^2 + v_{24}^2-2$.
 Using Gr\"obner basis software, such as {\tt Macaulay2} \cite{M2},
one checks that this ideal is the complete intersection of the six given quadrics,
it is radical, and its degree is $48$.  Primary decomposition reveals
that this ideal  is the intersection of $12$ prime ideals, each of degree $4$.
One of these associated primes~is 
$$  \langle \,
 v_{11} - v_{22}, v_{12}+v_{21},\,
 v_{31} - v_{42}, v_{32}+v_{41}\,
 \rangle\, + \,\langle \,v_{i1}^2 + v_{i2}^2-1 \,: \,i=1,2,3,4\, \rangle. $$
 The irreducible variety of this particular prime ideal consists of the  $2\times 4$-matrices 
$$ V \quad = \quad \begin{pmatrix} R_1 \mid R_2 \end{pmatrix}, $$
where $R_1$ and $R_2$ are rotation matrices of format $2 \times 2$.
The other $11$ components are obtained by replacing 
$R_i$ by $-R_i$ and permuting columns. The image of $V$ under the
map to binary forms is a linear combination of two odeco forms,
one given by $R_1$ and the other  by $R_2$. 
\hfill $\diamondsuit$
\end{example}

The real points of   $\mathcal{F}_{r,n}$ live
in $(\mathbb{S}^{n-1})^r$ where
$\mathbb{S}^{n-1} = \{u \in \mathbb{R}^n: \sum_{i=1}^n u_i^2 = 1\}$
denotes the unit sphere. However, the vectors on these
spheres will get scaled by the multipliers $\lambda_i^{1/d}$
in (\ref{eq:Trank2}) when we pass to the fradeco
variety $\mathcal{T}_{r,n,d}$. To achieve better
geometric properties and computational speed,
we map each real sphere $\mathbb{S}^{n-1}$
to complex projective $(n{-}1)$-space  $\mathbb{P}^{n-1}$.

The {\em projective funtf variety} $\mathcal{G}_{r,n}$ is the image of
$\mathcal{F}_{r,n}$ in $(\mathbb{P}^{n-1})^r$.
To describe its equations,
we use an $n \times r$-matrix
$V = (v_{ij})$ of unknowns as before, but now the $i$-th column of
$V$ represents coordinates on the $i$-th
factor of $(\mathbb{P}^{n-1})^r$.
We introduce the $r\times r$ diagonal matrix 
\begin{equation}
\label{eq:Ddiag}
 D \quad = \quad {\rm diag} \biggl(
\sum_{i=1}^n v_{i1}^2\,,\,
\sum_{i=1}^n v_{i2}^2\, , \,
\ldots,\,\sum_{i=1}^n v_{ir}^2 \biggr). 
\end{equation}
The variety $\mathcal{G}_{r,n}$ is defined by the 
following matrix equation:
\begin{equation}
\label{eq:matrixeqn}
V \cdot D^{-1} \cdot V^T \,=\, \frac{r}{n} \cdot {\rm Id}_n .
\end{equation}
Each entry on the left hand side is a 
homogeneous rational function of degree $0$.
In fact, these functions are multihomogeneous:
they define rational functions on $(\mathbb{P}^{n-1})^r$.

The challenge is to clear denominators in (\ref{eq:matrixeqn}),
so as to obtain a system of polynomial equations that defines
$\mathcal{G}_{r,n}$ as a subvariety of $(\mathbb{P}^{n-1})^r$.
Next we solve this problem for $n=2$.

For planar frames, equation (\ref{eq:matrixeqn}) translates into the 
vanishing of the  two rational functions
\begin{equation}
\label{eq:PandQ}
P \,=\,\sum_{j=1}^r \frac{2v_{1j}^2 }{v_{1j}^2 + v_{2j}^2} \,- \,r
 \qquad \hbox{and} \qquad
Q \,=\,\sum_{j=1}^r \frac{2v_{1j} v_{2j} }{v_{1j}^2 + v_{2j}^2} .
\end{equation}
Consider the numerator of the rational function
$$ P  - i Q \quad = \quad
\sum_{j=1}^r \frac{v_{1j}^2 -2i v_{1j}v_{2j} - v_{2j}^2}{v_{1j}^2 + v_{2j}^2}
\quad = \quad \sum_{j=1}^r \frac{v_{1j} - v_{2j} i}{v_{1j} + v_{2j}i},
\qquad \hbox{where} \,\,\,i  = \sqrt{-1}.
$$
Let $\widetilde P$ and $\widetilde Q$ denote the real part and 
the imaginary part of that numerator.
These are two multilinear polynomials of degree $r$ with integer coefficients in
$v_{11},v_{12},\ldots,v_{2r}$. They define a complete intersection, and, by 
construction, this is precisely our funtf variety in $(\mathbb{P}^1)^r$:

\begin{lemma} \label{lem:multilinear}
The projective funtf variety $\mathcal{G}_{r,2}$ is a complete intersection of codimension~$2$ in
  $(\mathbb{P}^{1})^r$, namely, it is the zero set of the two
  multilinear forms $\widetilde P$ and $ \widetilde Q$.
\end{lemma}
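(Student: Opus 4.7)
The plan is to prove the set-theoretic identity $V(\widetilde P,\widetilde Q)=\mathcal G_{r,2}$, which together with a codimension count automatically yields the complete intersection claim. As a warm-up I would verify that $\widetilde P,\widetilde Q$ are multilinear sections of $\mathcal O(1,\ldots,1)$ on $(\mathbb P^1)^r$: writing $\alpha_j=v_{1j}-iv_{2j}$ and $\beta_j=v_{1j}+iv_{2j}$, the numerator of $P-iQ$ is
\[
N \;=\; \sum_{j=1}^r\alpha_j\prod_{k\neq j}\beta_k,
\]
which is visibly multilinear of multidegree $(1,\ldots,1)$, so $\widetilde P=\mathrm{Re}(N)$ and $\widetilde Q=\mathrm{Im}(N)$ inherit the same property. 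The easy inclusion $\mathcal G_{r,2}\subseteq V(\widetilde P,\widetilde Q)$ follows by restricting to the dense open subset $U\subseteq(\mathbb P^1)^r$ where the diagonal matrix $D$ of \eqref{eq:Ddiag} is invertible: on $U$ one may clear denominators in \eqref{eq:PandQ} to see that $P=Q=0$ is equivalent to $N=\bar N=0$, i.e.\ to $\widetilde P=\widetilde Q=0$. Since every real funtf satisfies $v_{1j}^2+v_{2j}^2=1$, the image of $\mathcal F_{r,2}$ lies entirely in $U$, and taking Zariski closures yields the desired inclusion.

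For the reverse inclusion I would argue in two steps. First, $\widetilde P$ and $\widetilde Q$ are coprime, so by Krull's height theorem $V(\widetilde P,\widetilde Q)$ has pure codimension $2$. To rule out a shared factor it suffices to exhibit a point where exactly one of the forms vanishes; for instance, with $(v_{1j},v_{2j})=(0,1)$ for $j=1$ and $(1,0)$ for $j\geq 2$ one computes $N=i$, giving $\widetilde P=0\neq\widetilde Q$, and a symmetric example gives the opposite separation. Second, no codimension-$2$ component of $V(\widetilde P,\widetilde Q)$ can be supported in the pole locus $V(D)=\bigcup_j\bigl(\{\alpha_j=0\}\cup\{\beta_j=0\}\bigr)$. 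Indeed on a prime divisor of $V(D)$, say $\{\beta_\ell=0\}$, one has $\alpha_\ell\neq 0$ (otherwise the $\ell$th column is zero in $\mathbb P^1$), so $N|_{\beta_\ell=0}=\alpha_\ell\prod_{k\neq\ell}\beta_k$ forces a further $\beta_k=0$; repeating the analysis for $\bar N$ on the smaller stratum produces a third independent condition, so $V(\widetilde P,\widetilde Q)\cap V(D)$ has codimension at least~$3$.

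Combining these, the codimension-$2$ part of $V(\widetilde P,\widetilde Q)$ is the Zariski closure of $V(\widetilde P,\widetilde Q)\cap U$, which by the first paragraph equals $\mathcal G_{r,2}$. Since $\dim\mathcal G_{r,2}=r-2$ by Theorem~\ref{thm:strawn} (the projection $\mathcal F_{r,2}\to(\mathbb P^1)^r$ is finite, collapsing each column only up to a sign), this forces $V(\widetilde P,\widetilde Q)=\mathcal G_{r,2}$ and exhibits it as a complete intersection of codimension $2$. The main obstacle is the second step of the reverse inclusion: one must carefully verify that the successive restrictions of $N$ and $\bar N$ to the reducible divisor $V(D)$ drop dimension as expected rather than degenerating to zero along a whole codimension-$2$ stratum. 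Once that is done, the complete intersection conclusion is immediate, because two multilinear forms always cut out a variety of codimension at most~$2$.
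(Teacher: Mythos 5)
The paper itself offers no proof of this lemma: the two sentences preceding its statement simply assert the conclusion ``by construction.'' Your argument supplies the missing reasoning, and its skeleton is sound: work on the open set $U$ where the diagonal matrix $D$ of \eqref{eq:Ddiag} is invertible, identify $V(\widetilde P,\widetilde Q)\cap U$ with the image of $\mathcal{F}_{r,2}$ via \eqref{eq:matrixeqn}, and rule out codimension-$2$ components of $V(\widetilde P,\widetilde Q)$ inside the pole locus $V(D)$ by a stratified degeneration argument.

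There is, however, a genuine gap in the coprimality step. You assert that exhibiting a point where exactly one of $\widetilde P,\widetilde Q$ vanishes rules out a shared factor, but this is logically insufficient: if $\widetilde P=hf$ and $\widetilde Q=hg$ with $h$ a non-constant common factor and $f,g$ coprime, there are still points where exactly one of the two forms vanishes (for instance on $V(f)\setminus V(hg)$). What your two test points actually establish is only that neither form divides the other. To exclude a proper common factor $h$, you can argue as follows, essentially reusing your own second step: such an $h$ would force $V(\widetilde P,\widetilde Q)$ to contain the codimension-$1$ set $V(h)$. Since $N|_{\alpha_j=0}$ and $N|_{\beta_j=0}$ are visibly nonzero polynomials for every $j$, none of the linear forms $\alpha_j,\beta_j$ divides $N$, so $V(h)$ is not contained in $V(D)$; hence $V(h)\cap U$ is nonempty of dimension $r-1$, contradicting the fact that $V(\widetilde P,\widetilde Q)\cap U$, being the image of $\mathcal{F}_{r,2}$, has dimension $r-2$. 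One further small point: your ``easy inclusion'' only records $\mathcal{G}_{r,2}\subseteq V(\widetilde P,\widetilde Q)$, whereas the final paragraph needs the equality $V(\widetilde P,\widetilde Q)\cap U=\mathcal{G}_{r,2}\cap U$; the reverse containment holds because any point of $U$ satisfying \eqref{eq:matrixeqn} lifts to a funtf after scaling each column $v_j$ by a complex square root of $D_{jj}$. With these two repairs in place, your proof is complete and amounts to the proof the paper omitted.
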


Here are explicit formulas for the multilinear forms that define $\mathcal{G}_{r,2}$ when $r \leq 5$:

\begin{example} \rm
If $r = 3$, then
$\widetilde P = 3 v_{11} v_{12} v_{13} + v_{11} v_{22} v_{23} +  
v_{21} v_{12}v_{23} +  v_{21} v_{22} v_{13}$ and
$ \widetilde Q  =  v_{11} v_{12} v_{23} {+} v_{11}  v_{22} v_{13}
{+}  v_{21} v_{12} v_{13} {+} 3 v_{21} v_{22} v_{23} $.
 If $r {=} 4$, then $ \widetilde P  =  4 (v_{11} v_{12} v_{13} v_{14}
 -  v_{21} v_{22} v_{23} v_{24} ) $~and
 $$ \begin{matrix}
\widetilde Q & = & 2 v_{11} v_{12} v_{13} v_{24} 
+ 2 v_{11} v_{12} v_{23} v_{14} + 
2v_{11} v_{22}v_{13} v_{14}  + 2 v_{11} v_{22} v_{23} v_{24} +  \\ & & 
2  v_{21}v_{12} v_{13} v_{14} + 2 v_{21} v_{12} v_{23} v_{24} + 
2  v_{21} v_{22}v_{13} v_{24} + 2  v_{21} v_{22} v_{23} v_{14}. \,\,
\end{matrix}
$$
If $r = 5$, then
$$
\begin{matrix} 
\widetilde P  & = &5 v_{11} v_{12} v_{13} v_{14} v_{15} 
- v_{11} v_{12} v_{13} v_{24} v_{25} 
- v_{11} v_{12} v_{23} v_{14} v_{25} 
- v_{11} v_{12} v_{23} v_{24} v_{15}  \\ & & 
- v_{11} v_{22} v_{13} v_{14} v_{25} 
- v_{11} v_{22} v_{13} v_{24} v_{15} 
- v_{11} v_{22} v_{23} v_{14} v_{15}
- 3 v_{11} v_{22} v_{23} v_{24} v_{25} \\ & & 
-   v_{21} v_{12} v_{13} v_{14} v_{25} 
-   v_{21} v_{12} v_{13} v_{24} v_{15}
-   v_{21} v_{12} v_{23} v_{14} v_{15}
- 3 v_{21} v_{12} v_{23} v_{24} v_{25} \\ & & 
-   v_{21} v_{22} v_{13} v_{14} v_{15}
- 3 v_{21} v_{22} v_{13} v_{24} v_{25} 
- 3 v_{21} v_{22} v_{23} v_{14} v_{25} 
- 3 v_{21} v_{22} v_{23} v_{24} v_{15},
\end{matrix}
$$
and $\widetilde Q$ is obtained from $\widetilde P$ by
switching the two rows of $V$.
\hfill $\diamondsuit $
\end{example}

Such formulas are useful for parametrizing frames.
We write the equations for $\mathcal{G}_{r,2}$  as
$$ \begin{pmatrix} \widetilde P \\ \widetilde Q \end{pmatrix} \,\,  = \,\,
\begin{pmatrix} m_{11} & m_{12} \\ m_{21} & m_{22} \end{pmatrix} \cdot 
\begin{pmatrix} v_{1r} \\ v_{2r} \end{pmatrix} \,\, = \,\, \begin{pmatrix} 0 \\ 0 \end{pmatrix} .$$
The matrix entries $m_{ij}$ are multilinear forms in 
$(v_{11}:v_{21})$, $(v_{12}:v_{22})$, \ldots, $(v_{1,r-1}:v_{2,r-1})$.
Using the quadratic formula, we solve the following equation 
for one of its unknowns:
\begin{equation}
\label{eq:eliminant}
 m_{11} m_{22} \,=\, m_{12} m_{21}. 
 \end{equation}
This defines a hypersurface in  $(\mathbb{P}^1)^{r-1}$,
from which  we can now easily sample points.
The point in the remaining $r$th factor $\mathbb{P}^1$ is  then
recovered by setting
$\, v_{1r}  = m_{12} $, $\,v_{2r} = -m_{11}$.

For $n \geq 3$, we do not know the
generators of the multihomogeneous prime
ideal of $\mathcal{G}_{r,n}$. Here are two instances where
{\tt Macaulay2} \cite{M2} succeeded in computing these ideals:

\begin{example} \rm
The variety $\mathcal{G}_{4,3}$ is a threefold in $(\PP^2)^4$.
Its ideal is generated by $34$ quartics. Among them are
the equations that define the six coordinate projections into $(\PP^2)^2$, like
$$ \begin{matrix} 8 (v_{11}^2 v_{12}^2+ v_{21}^2 v_{22}^2+ v_{31}^2 v_{32}^2)
+18 (
  v_{11} v_{21} v_{12} v_{22}
+ v_{11} v_{31} v_{12} v_{32} 
+ v_{21} v_{31} v_{22} v_{32}) \\
-v_{11}^2 v_{22}^2
-v_{11}^2 v_{32}^2
-v_{21}^2 v_{12}^2
-v_{21}^2 v_{32}^2
-v_{31}^2 v_{12}^2 
-v_{31}^2 v_{22}^2.
\end{matrix} $$
\end{example}

\begin{example} \rm
Let $r=5$ and $n=3$. 
By saturating the denominators in (\ref{eq:matrixeqn}),
we found that the ideal of $\mathcal{G}_{5,3}$ is
generated by  a $120$-dimensional  ${\rm SO}_3$-invariant
space of sextics. The following polynomial (with $60$ terms
of $\mathbb{Z}^5$-degree $(2,2,2,0,0)$) is a highest weight vector:
\scriptsize
$$ \begin{matrix}
50 v_{11}^2 v_{12}^2 v_{13}^2
+5 v_{11}^2 v_{12}^2 v_{23}^2
+5 v_{11}^2 v_{12}^2 v_{33}^2
+45 v_{11}^2 v_{12} v_{22} v_{13} v_{23}
+45 v_{11}^2 v_{12} v_{32} v_{13} v_{33} 
+5 v_{11}^2 v_{22}^2 v_{13}^2
+5 v_{11}^2 v_{22}^2 v_{23}^2
-4 v_{11}^2 v_{22}^2 v_{33}^2  \\
+18 v_{11}^2 v_{22} v_{32} v_{23} v_{33}
+5 v_{11}^2 v_{32}^2 v_{13}^2
-4 v_{11}^2 v_{32}^2 v_{23}^2
+5 v_{11}^2 v_{32}^2 v_{33}^2
+45 v_{11} v_{21} v_{12}^2 v_{13} v_{23}
+45 v_{11} v_{21} v_{12} v_{22} v_{13}^2 
+18 v_{11} v_{21} v_{32}^2 v_{13} v_{23} 
\\
+45 v_{11} v_{21} v_{12} v_{22} v_{23}^2 
+18 v_{11} v_{21} v_{12} v_{22} v_{33}^2
+27 v_{11} v_{21} v_{12} v_{32} v_{23} v_{33}
+45 v_{11} v_{21} v_{22}^2 v_{13} v_{23}
+27 v_{11} v_{21} v_{22} v_{32} v_{13} v_{33} \\
+45 v_{11} v_{31} v_{12}^2 v_{13} v_{33}  
+27 v_{11} v_{31} v_{12} v_{22} v_{23} v_{33} 
+45 v_{11} v_{31} v_{12} v_{32} v_{13}^2
+18 v_{11} v_{31} v_{12} v_{32} v_{23}^2
+45 v_{11} v_{31} v_{12} v_{32} v_{33}^2 
-4 v_{21}^2 v_{12}^2 v_{33}^2
\\
+18 v_{11} v_{31} v_{22}^2 v_{13} v_{33} 
+27 v_{11} v_{31} v_{22} v_{32} v_{13} v_{23} 
+45 v_{11} v_{31} v_{32}^2 v_{13} v_{33} 
+5 v_{21}^2 v_{12}^2 v_{13}^2 
+5 v_{21}^2 v_{12}^2 v_{23}^2
+45 v_{21}^2 v_{12} v_{22} v_{13} v_{23} \\ 
+18 v_{21}^2 v_{12} v_{32} v_{13} v_{33} 
+5 v_{21}^2 v_{22}^2 v_{13}^2 
+50 v_{21}^2 v_{22}^2 v_{23}^2 
+5 v_{21}^2 v_{22}^2 v_{33}^2 
+45 v_{21}^2 v_{22} v_{32} v_{23} v_{33}
-4 v_{21}^2 v_{32}^2 v_{13}^2
+5 v_{21}^2 v_{32}^2 v_{23}^2
+5 v_{21}^2 v_{32}^2 v_{33}^2 \\
+18 v_{21} v_{31} v_{12}^2 v_{23} v_{33} 
+27 v_{21} v_{31} v_{12} v_{22} v_{13} v_{33} 
+27 v_{21} v_{31} v_{12} v_{32} v_{13} v_{23} 
+45 v_{21} v_{31} v_{22}^2 v_{23} v_{33}
+18 v_{21} v_{31} v_{22} v_{32} v_{13}^2 \\
+45 v_{21} v_{31} v_{22} v_{32} v_{23}^2 
+45 v_{21} v_{31} v_{22} v_{32} v_{33}^2  
+45 v_{21} v_{31} v_{32}^2 v_{23} v_{33} 
+5 v_{31}^2 v_{12}^2 v_{13}^2 
-4 v_{31}^2 v_{12}^2 v_{23}^2
+5 v_{31}^2 v_{12}^2 v_{33}^2
+18 v_{31}^2 v_{12} v_{22} v_{13} v_{23} \\
+45 v_{31}^2 v_{12} v_{32} v_{13} v_{33} 
-4 v_{31}^2 v_{22}^2 v_{13}^2
+5 v_{31}^2 v_{22}^2 v_{23}^2 
+5 v_{31}^2 v_{22}^2 v_{33}^2
+45 v_{31}^2 v_{22} v_{32} v_{23} v_{33}
+5 v_{31}^2 v_{32}^2 v_{13}^2
+5 v_{31}^2 v_{32}^2 v_{23}^2
+50 v_{31}^2 v_{32}^2 v_{33}^2.
\end{matrix}
$$
\normalsize
The ideal of $\mathcal{G}_{5,3} \subset (\PP^2)^5$ has
 $10$ generators like this, each spanning
a  one-dimensional graded component.
It has $30$ components
of degrees like $(2,2,1,1,0)$, each 
generated by a polynomial with $78$ terms.
Finally, it has five $16$-dimensional
components of degrees like $(2,1,1,1,1)$.
\hfill $\diamondsuit$
\end{example}
\smallskip

In order to sample points from the funtf variety $\mathcal{F}_{r,n}$,
we can also use  the following parametrization found in \cite{CStr, Str}.
We write $V = (U', W)$, where $U'$ is
an $n \times n$-matrix and $W$ is an $(r-n) \times n$-matrix.
For the columns of $W $ we take
arbitrary points on the unit sphere $\mathbb{S}^{n-1}$.
In practice, it is convenient to fix a rational parametrization
of $\mathbb{S}^{n-1}$, so as to ensure that $W$ has
rational entries $w_{ij}$. For instance, for $n=3$ we might use
the following formulas:
\begin{equation}
\label{eq:strawpara1}
\!\! w_{1j} =  \frac{2 \lambda_j \mu_j}{ \lambda_j^2 + \mu_j^2 + \nu_j^2} ,\,\,
w_{2j} = \frac{2 \lambda_j \nu_j}{ \lambda_j^2 + \mu_j^2 + \nu_j^2},\,\,
w_{3j} = \frac{\lambda_j^2 - \mu_j^2 - \nu_j^2}{ \lambda_j^2 + \mu_j^2 + \nu_j^2},
\quad {\rm where} \,\,\lambda_j,\mu_j,\nu_j \in \mathbb{Z}. \
\end{equation}
After these choices have been made, we fix the
following $n \times n$-matrix with entries in $\mathbb{Q}$:
\begin{equation}
\label{eq:strawpara2}
 S \,\, = \,\, \frac{r}{n} \cdot {\rm Id}_n \,-\, W \cdot W^{T}. 
 \end{equation}
 It now remains to study all $n \times n$-matrices $U = (u_{ij})$
 that satisfy
 $$
U \cdot D^{-1} \cdot U^{T} \,=\, S ,
\,\,\hbox{where} \,\,\,\,
 D \,\, = \,\, {\rm diag} \bigl(\,
\sum_{i=1}^n u_{i1}^2\,,\,
\ldots,\,\sum_{i=1}^n u_{in}^2\, \bigr). 
$$
For any such $U$ we get a funtf $V  = (U',W) \in \mathcal{F}_{r,n}$ by setting $U' = U\cdot D^{-1/2}$.
For random choices in (\ref{eq:strawpara1}), the matrix
 $S$ is invertible, and  the previous equation is equivalent to
\begin{equation}
\label{eq:strawpara3}
D \,\,=\,\, U^{T} \cdot S^{-1} \cdot U.
\end{equation}
This identity of symmetric matrices
defines $\binom{n+1}{2}$ equations
in the entries $u_{ij}$ of $U$. 
The equation in position $(i,j)$ is bilinear in
 $(u_{1i}, u_{2i},\ldots,u_{ni})$ and
 $(u_{1j}, u_{2j},\ldots,u_{nj})$.
We solve the system (\ref{eq:strawpara3}) iteratively for the columns of $U$.
 We begin with the (1,1)  entry of (\ref{eq:strawpara3}).
 There are $n-1$ degrees of freedom to fill in the first column of $U$,
 then $n-2$ degrees of freedom to fill in the second column, etc.
This involves repeatedly solving quadratic equations in
one variable, so each solution lives in a tower
of quadratic extensions over $\mathbb{Q}$. In summary:

\begin{proposition}
The equations (\ref{eq:strawpara1}),
(\ref{eq:strawpara2}), (\ref{eq:strawpara3})
represent a parametrization of $\mathcal{F}_{r,n}$.
\end{proposition}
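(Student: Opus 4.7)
The plan is to show that the construction is a well-defined dominant rational map from the affine parameter space (with coordinates $\lambda_j, \mu_j, \nu_j$ and the entries $u_{ij}$) onto the funtf variety $\mathcal{F}_{r,n}$, by verifying two things: every output $V = (UD^{-1/2}, W)$ is a funtf, and, conversely, a Zariski-dense subset of $\mathcal{F}_{r,n}$ arises this way.

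For the forward direction, I would first note that (\ref{eq:strawpara1}) is (for $n=3$, and analogously for other $n$) the standard rational parametrization of $\mathbb{S}^{n-1}$ via stereographic projection, so each column of $W$ has unit norm. For the block $U' := UD^{-1/2}$, the $j$-th column is $D_{jj}^{-1/2}$ times the $j$-th column of $U$, whose squared norm equals $D_{jj}/D_{jj} = 1$ by definition of $D$. Hence all $r$ columns of $V=(U',W)$ lie on $\mathbb{S}^{n-1}$, matching the second condition of (\ref{eq:funtf}). Then I would compute
\[
V V^T \,=\, U' (U')^T + W W^T \,=\, U D^{-1} U^T + W W^T.
\]
Assuming $U$ and $S$ are invertible, the equation $U D^{-1} U^T = S$ is equivalent, by inverting both sides, to $U^T S^{-1} U = D$, which is exactly (\ref{eq:strawpara3}). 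Substituting the definition (\ref{eq:strawpara2}) of $S$ then gives $V V^T = S + WW^T = (r/n)\,\mathrm{Id}_n$, confirming the first condition of (\ref{eq:funtf}).

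For the converse (dominance), given a generic $V \in \mathcal{F}_{r,n}$, split it as $V = (U', W)$ where $W$ is the $n \times (r-n)$ block consisting of its last $r-n$ columns. Since the columns of $W$ are unit vectors in general position, they lie in the image of (\ref{eq:strawpara1}), which parametrizes a Zariski-dense subset of $(\mathbb{S}^{n-1})^{r-n}$. The remaining block $U'$ has unit-norm columns and satisfies $U'(U')^T = V V^T - W W^T = S$. Setting $U := U'$ then makes $D = \mathrm{Id}_n$, so that $U D^{-1} U^T = U'(U')^T = S$, i.e.\ (\ref{eq:strawpara3}) holds; and $UD^{-1/2} = U' = $ the first block of $V$. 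Thus every such $V$ is in the image.

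The only real subtlety — and the main obstacle — is the use of invertibility of $S$ in passing between $UD^{-1}U^T = S$ and $U^T S^{-1}U = D$. To handle this I would observe that $\det S$ is a nonzero polynomial on the parameter space (since generic $W$ gives $WW^T$ not equal to $(r/n)\,\mathrm{Id}_n$), so the singular locus is a proper Zariski-closed subset, and the equivalence above holds on the complement. Finally, I would remark that rescaling any column of $U$ by $c \in \mathbb{C}^\times$ rescales the corresponding entry of $D$ by $c^2$ and hence leaves $U' = UD^{-1/2}$ unchanged, so the parametrization carries a harmless $(\mathbb{C}^\times)^n$-fiber, consistent with the extra scalar degrees of freedom built into $U$.
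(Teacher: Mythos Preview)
Your proof is correct and follows essentially the same line as the paper, which states the proposition simply as a summary of the preceding construction rather than giving a separate argument. Your write-up is in fact more explicit than the paper's, particularly in spelling out the dominance (surjectivity) direction via the observation that for a generic $V=(U',W)\in\mathcal{F}_{r,n}$ one may take $U=U'$, $D=\mathrm{Id}_n$.
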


The rotation group ${\rm SO}_n$ acts by left multiplication
on the funtf variety $\mathcal{F}_{r,n}$. There is 
a natural way to construct the quotient
$\mathcal{F}_{r,n}/{\rm SO}_n$ as an algebraic variety,
namely by mapping it into the Grassmannian
${\rm Gr}(n,r)$ of $n$-dimensional subspaces
of $\CC^r$. This is described by Cahill and Strawn in \cite[Section 3.1]{CStr},
and we briefly develop some basic algebraic properties.

We here define ${\rm Gr}(n,r)$ to be the image of the {\em Pl\"ucker map}
$ \CC^{n \times r} \rightarrow \CC^{\binom{r}{n}}$
that takes an $n \times r$-matrix $V $ to its vector $p = p(V)$
of $n \times n$-minors. The coordinates
$p_I$ of $p$ are indexed by the set $\binom{[r]}{n}$ of $n$-element
subsets of $[r] = \{1,2,\ldots,r\}$. With this definition, ${\rm Gr}(n,r)$ is the
affine subvariety of
$\CC^{\binom{r}{n}}$ defined by the {\em quadratic Pl\"ucker relations},
such as $p_{12} p_{34} - p_{13} p_{24} + p_{14} p_{23} = 0$
for $n=2,r=4$.
The dimension of ${\rm Gr}(n,r) $ is  $(r-n)n + 1$.
Note that if $V V^T = (r/n) \cdot {\rm Id}_n$, then
the {\em Cauchy-Binet formula}  (cf.~\cite[Prop.~6]{CStr}) implies
\begin{equation}
\label{eq:sum1}
\sum_{I \in \binom{[r]}{n}} p_I^2 \,\, = \,\, \left( \frac{r}{n} \right)^n. 
\end{equation}
The real points in ${\rm Gr}(n,r)$, up to scaling, correspond to 
$n$-dimensional subspaces of $\RR^r$.

\begin{proposition}
The image of $\mathcal{F}_{r,n}$ under the Pl\"ucker map
is an affine variety of dimension  
$(r{-}n)n - r + 2$ in the Grassmannian ${\rm Gr}(n,r) \subset \CC^{\binom{r}{n}}$. It is defined by the 
equations
\begin{equation}
\label{eq:sum2}
 \sum_{I: i \in I} p_I^2 \,\,\,=\,\,\, \left(\frac{r}{n}\right)^{n-1}
\qquad \hbox{for} \,\,\, i = 1,2,\ldots,r . 
\end{equation}
The real points in this image correspond to ${\rm SO}_n$-orbits
of $n$-dimensional frames in $\mathcal{F}_{r,n}$.
\end{proposition}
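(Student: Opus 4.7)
The plan is to prove the three assertions --- that (\ref{eq:sum2}) holds on the Plücker image, that it defines the image, and that the image coincides with the ${\rm SO}_n$-quotient --- by reducing everything to a single key identity. First I would establish that for any $n\times r$ matrix $V$ with $VV^T=(r/n)\,{\rm Id}_n$,
\[
\sum_{I:\,i\in I} p_I(V)^2 \;=\; (r/n)^{n-1}\,\|{\bf v}_i\|^2.
\]
To derive this, form the augmented $(n{+}1)\times r$ matrix $\tilde V=\bigl(\begin{smallmatrix}V\\ e_i^T\end{smallmatrix}\bigr)$ and compute $\det(\tilde V\tilde V^T)$ two ways. Cauchy--Binet gives $\sum_{J\in\binom{[r]}{n+1}}\det(\tilde V_J)^2$; expanding each $(n{+}1)\times(n{+}1)$ determinant along its bottom row (a single $1$ in the slot of $i$ when $i\in J$, and zeros otherwise) reduces the sum to $\sum_{I\not\ni i}p_I(V)^2$. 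On the other hand, the Schur complement applied to the block form $\tilde V\tilde V^T=\bigl(\begin{smallmatrix}(r/n){\rm Id}_n & {\bf v}_i\\ {\bf v}_i^T & 1\end{smallmatrix}\bigr)$ yields $(r/n)^n-(r/n)^{n-1}\|{\bf v}_i\|^2$. Subtracting this from (\ref{eq:sum1}) --- Cauchy--Binet for $V$ itself --- produces the identity, and (\ref{eq:sum2}) then follows by specializing to $V\in\mathcal{F}_{r,n}$ where $\|{\bf v}_i\|=1$.

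For the converse, I would take $p\in{\rm Gr}(n,r)$ satisfying (\ref{eq:sum2}) and reconstruct a funtf. Summing the $r$ equations over $i$ forces $\sum_I p_I^2=(r/n)^n$, so any lift $V$ with $p(V)=p$ satisfies $\det(VV^T)=(r/n)^n$ by Cauchy--Binet. Set $A:=\sqrt{r/n}\,(VV^T)^{-1/2}$; then $\det A=1$ and $(AV)(AV)^T=(r/n)\,{\rm Id}_n$. Replacing $V$ by $AV$ does not alter the Plücker point, and the key identity forces $\|{\bf v}_i\|=1$ for every $i$, so $AV\in\mathcal{F}_{r,n}$. Over $\CC$ the matrix square root is not canonical, but exists on the Zariski-dense open locus where $VV^T$ is regular semisimple, which suffices to identify the Plücker image with the vanishing locus of (\ref{eq:sum2}) as closed subvarieties of ${\rm Gr}(n,r)$.

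The identification with the ${\rm SO}_n$-quotient proceeds as follows. The Plücker map is ${\rm SO}_n$-invariant since $p_I(AV)=\det(A)\,p_I(V)=p_I(V)$ when $A\in{\rm SO}_n$. Conversely, if $V,V'\in\mathcal{F}_{r,n}$ share a Plücker vector, they have the same row span, so $V'=AV$ for some $A\in{\rm GL}_n$; matching Plücker coordinates forces $\det A=1$, and the shared tight-frame condition $AVV^TA^T=VV^T=(r/n){\rm Id}_n$ gives $AA^T={\rm Id}_n$, hence $A\in{\rm SO}_n$. Fibers of the Plücker map on $\mathcal{F}_{r,n}$ are therefore precisely ${\rm SO}_n$-orbits, establishing the real-point claim; the dimension then comes by subtracting $\dim{\rm SO}_n=\binom{n}{2}$ from Theorem \ref{thm:strawn}.

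The main obstacle is the converse step. While the matrix-square-root argument gives a set-theoretic surjection on a dense open locus, proving scheme-theoretic equality in ${\rm Gr}(n,r)$ --- i.e., that the ideal generated by (\ref{eq:sum2}) together with the Plücker relations has no extraneous components --- requires either a Zariski-density/tangent-space argument or an appeal to the irreducibility of $\mathcal{F}_{r,n}$ from Theorem \ref{thm:strawn} (valid when $r\ge n+2>4$). A secondary technicality is showing that the degenerate locus where $VV^T$ fails to be regular semisimple contributes no extra components to the vanishing locus of (\ref{eq:sum2}).
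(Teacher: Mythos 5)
Your argument is correct and reaches the same key identity as the paper, namely $\sum_{I\ni i}p_I(V)^2=(r/n)^{n-1}\lVert\mathbf v_i\rVert^2$ for $V$ with $VV^T=(r/n)\,{\rm Id}_n$, but you derive it by a genuinely different device. The paper first rotates so that the $i$-th column is $(\alpha,0,\dots,0)^T$, observes that deleting the first row and $i$-th column leaves a matrix $\tilde V$ with $\tilde V\tilde V^T=(r/n)\,{\rm Id}_{n-1}$, and then applies Cauchy--Binet one dimension down; you instead augment $V$ by the row $e_i^T$ and compare Cauchy--Binet with the Schur-complement evaluation of $\det(\tilde V\tilde V^T)$, so no choice of rotation is needed. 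The trade-off is that your route requires explicitly rescaling an arbitrary Pl\"ucker lift to the scaled-Parseval normal form $VV^T=(r/n)\,{\rm Id}_n$ in the converse direction, and the matrix-square-root step you invoke works only on a dense locus over $\CC$; it would be a bit cleaner to observe that any complex symmetric $M$ with $\det M=(r/n)^n$ is $\mathrm{SL}_n(\CC)$-congruent to $(r/n)\,{\rm Id}_n$ (split the signs to arrange $\det=+1$), which avoids the square root and handles all invertible $M$ at once. The paper sidesteps this issue by quantifying over $V$ with $VV^T=(r/n)\,{\rm Id}_n$ from the outset and working modulo ${\rm SO}_n$, leaving the existence of a scaled-Parseval representative implicit. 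Your fiber analysis identifying Pl\"ucker fibers on $\mathcal F_{r,n}$ with ${\rm SO}_n$-orbits and the resulting dimension count $\dim\mathcal F_{r,n}-\binom{n}{2}=(r-n)n-r+2$ match the paper's closing line.
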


Note that adding up the $r$ relations in
(\ref{eq:sum2}) and dividing by $n$ gives precisely
(\ref{eq:sum1}).

\begin{proof}
Both $\mathcal{F}_{r,n}$ and the constraints (\ref{eq:sum2})
are invariant under ${\rm SO}_n$.
Suppose that $V \in \CC^{n \times r}$ satisfies $V V^T = (r/n) \cdot {\rm Id}_n$.
We may assume (modulo ${\rm SO}_n$) that
 the $i$-th column of $V$
is $(\alpha,0,\ldots,0)^T$ for some
$\alpha \in \CC$. Let $\tilde V$ be the matrix obtained from 
$V$ by deleting the first row and $i$-th column.
Then $\,\tilde V \cdot {\tilde V}^T \,=\,(r/n) \cdot {\rm Id}_{n-1}$.
Any $p_{I}$ with $i \in I$ equals
$\alpha$ times the maximal minor 
of $\tilde V$ indexed by $I \backslash \{i\}$.
Applying (\ref{eq:sum1}) to $\tilde V$, this gives
$$  \sum_{I: i \in I} p_I^2 \,\,\, = \,\,\, \alpha^2 \cdot \left(\frac{r}{n}\right)^{n-1}. $$
Hence (\ref{eq:sum2}) holds if and only if $\alpha = \pm 1$, and
this holds for all $i$ if and only if $V$ lies in $\mathcal{F}_{r,n}$.
The dimension formula follows from Theorem~\ref{thm:strawn}
because ${\rm SO}_n$ acts faithfully on $\mathcal{F}_{r,n}$.
\end{proof}

\begin{example} \rm
Let $n=2$. If $r=5$, then our construction realizes 
$\mathcal{F}_{5,2}/{\rm SO}_2$ as an irreducible
surface of degree $80$ in $\CC^{10}$. Its prime ideal is generated by the
ten quadratic polynomials
$$ \begin{matrix}
 p_{14} p_{23}-p_{13} p_{24}+p_{12} p_{34}, \,
 p_{15} p_{23}-p_{13} p_{25}+p_{12} p_{35}, \,
 p_{15} p_{24}-p_{14} p_{25}+p_{12} p_{45}, \,
 p_{15} p_{34}-p_{14} p_{35} \\ +p_{13} p_{45}, \, 
 p_{25} p_{34}-p_{24} p_{35}+p_{23} p_{45}, \,
     p_{12}^2+p_{13}^2+p_{14}^2+p_{15}^2-5/2,\,
     p_{12}^2 + p_{23}^2 + p_{24}^2 + p_{25}^2 - 5/2, \,  \\
   p_{13}^2 + p_{23}^2 + p_{34}^2 + p_{35}^2 - 5/2, \, 
   p_{14}^2 + p_{24}^2 + p_{34}^2 + p_{45}^2 - 5/2, \, 
   p_{15}^2 + p_{25}^2 + p_{35}^2 + p_{45}^2 - 5/2.
     \end{matrix}
     $$
     If $r=4$, then $\mathcal{F}_{4,2}/{\rm SO}_2$ 
is a reducible curve of degree $24$ in
     $\CC^6$. Its defining equations are
     $$ p_{14} p_{23}-p_{13} p_{24}+p_{12} p_{34} = 0 ,\,\,
 p_{12}^2+p_{13}^2+p_{14}^2 =  
     p_{12}^2 + p_{23}^2 + p_{24}^2  = 
   p_{13}^2 + p_{23}^2 + p_{34}^2  = 
      p_{14}^2 + p_{24}^2 + p_{34}^2  = 2. $$
      As in Example \ref{eq:vierzwei},
this curve breaks into $12$  components.
One of these $12$ irreducible curves is
$\,
\bigl\{ \,p \in \CC^6 \,:\,
p_{12} = p_{34} = 1 ,\,
p_{13} = p_{24},\,
p_{14} = - p_{23},\,
p_{23}^2 + p_{24}^2 = 1 \bigr\}$.
\hfill $\diamondsuit$
\end{example}

The analogous decomposition is found easily for the case $r=n+1$.
Here, there are no Pl\"ucker relations, so ${\rm Gr}(n,n+1) \simeq \mathbb{S}^{n}$. 
For convenience of notation,  we set $q_i = p_{[n+1]\backslash \{i\}}$
 in (\ref{eq:sum2}). The quotient
space $\mathcal{F}_{n+1,n} /{\rm SO}_n$ is the
subvariety of $\CC^{n+1}$ defined by the equations
$$ q_1^2 + q_2^2 + \cdots + q_n^2 + q_{n+1}^2 \,\, = \,\,
(n+1)^{n-1}/n^n
+  \,\, q_i^2
\qquad \hbox{for} \,\, i = 1,2,\ldots,n+1. $$
These are equivalent to the following equations, which imply
Corollary~\ref{cor:quotientspace}:
$$ q_1^2 \,=\, q_2^2 \,= \, q_3^2 \,=\, \cdots \,=\, q_{n+1}^2 \,\, = \,\, (n+1)^{n-1}/n^{n+1}. $$

\begin{corollary} \label{cor:quotientspace}
The quotient space $\mathcal{F}_{n+1,n}/{\rm SO}_n$ is a variety consisting of
 $\,2^{n+1}$ isolated points in $\RR^{n+1} = {\rm Gr}(n,n+1)$, namely those
 points with coordinates  $\pm (n+1)^{(n-1)/2}/n^{(n+1)/2}$.
\end{corollary}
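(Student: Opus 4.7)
The plan is to specialize equation~(\ref{eq:sum2}) from the preceding proposition to $r = n+1$ and solve the resulting small system of quadratics explicitly.

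First I observe that $r = n+1$ is the degenerate case in which $\binom{r}{n} = n+1$ and there are no quadratic Pl\"ucker relations, so the Pl\"ucker map identifies ${\rm Gr}(n,n+1)$ with all of $\CC^{n+1}$. Using the labeling $q_i := p_{[n+1]\setminus\{i\}}$, an $n$-subset $I$ contains $i$ precisely when $I = [n+1]\setminus\{j\}$ for some $j \neq i$. With this relabeling, equation~(\ref{eq:sum2}) reads
\[ \sum_{j\neq i} q_j^{\,2} \;=\; \left(\frac{r}{n}\right)^{\!n-1}, \qquad i = 1, 2, \ldots, n+1, \]
which is exactly the system displayed immediately before the corollary.

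The key step is the linear-algebra observation that subtracting the $i$-th equation from the $k$-th yields $q_i^{\,2} - q_k^{\,2} = 0$ for every pair $i \neq k$. Hence all $q_i^{\,2}$ share a common value $c$, and substituting back into any single equation is the arithmetic identity that pins down $c$. Each coordinate $q_i$ is then independently $+\sqrt{c}$ or $-\sqrt{c}$, producing exactly $2^{n+1}$ isolated points in $\CC^{n+1}$.

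Finally, the preceding proposition identifies the image of $\mathcal{F}_{n+1,n}/{\rm SO}_n$ under the Pl\"ucker map as \emph{precisely} the zero locus of~(\ref{eq:sum2}); consequently every one of the $2^{n+1}$ sign patterns is automatically realized by some funtf, and no separate surjectivity check is needed. There is essentially no hard step---the whole argument is bookkeeping once the proposition is granted. The only small care required is to verify that the common value $c$ is strictly positive (which is clear from the positivity of the right-hand side of~(\ref{eq:sum2}) for $r > n \geq 1$), so that the two signs $\pm\sqrt{c}$ really do give distinct points and the count $2^{n+1}$ is exact.
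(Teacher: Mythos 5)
Your proof is correct and follows essentially the same route as the paper: specialize equation~(\ref{eq:sum2}) to $r=n+1$, relabel via $q_i = p_{[n+1]\setminus\{i\}}$, subtract pairs of equations to force $q_1^2=\cdots=q_{n+1}^2$, and substitute back to pin down the common value, with the preceding proposition supplying surjectivity.  (Incidentally, carrying out the arithmetic gives $q_i^2 = (n+1)^{n-1}/n^{n}$, i.e.\ $q_i = \pm(n+1)^{(n-1)/2}/n^{\,n/2}$, which matches the explicit point $p=\sqrt{3}\,(\tfrac{4}{9},\tfrac{4}{9},\tfrac{4}{9},\tfrac{4}{9})$ for $n=3$; the exponent $(n+1)/2$ appearing in the statement looks like a slip in the paper rather than in your argument.)
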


Any of the $2^{n+1}$ components of $\mathcal{F}_{n+1,n}$
can be used to parametrize our variety $\mathcal{T}_{n+1,n,d}$.

\begin{example} \rm
Let $n=3$. The point $p = \sqrt{3}(\frac{4}{9},\frac{4}{9},\frac{4}{9},\frac{4}{9})$
in ${\rm Gr}(3,4) $ corresponds to the ${\rm SO}_3$-orbit
of the frame $V$ in Example \ref{ex:opening}.
The variety $\mathcal{G}_{4,3}$ can be parametrized as follows:
$$
V = (v_{ij}) \,= \,
\begin{bmatrix}
1-2 y^2-2 z^2 &  2 x y-2 z w & 2xz+2yw \\
2 x y+2 z w & \! \! 1-2 x^2-2 z^2  \! & 2 y z-2 x w \\
2 x z-2 y w & 2 yz+2xw & \! 1-2x^2-2y^2 
\end{bmatrix} \! \!
\begin{bmatrix}
 3 &  1 &   1 &  \!\! -5 \\
 3 & 1 & \!\! -5 &   1 \\
 3 & \!\! -5 &   1 &   1
 \end{bmatrix} \!\!
 \begin{bmatrix}
 \nu_1 & 0 & 0 & 0 \\
 0 & \nu_2 & 0 & 0 \\
 0 & 0 & \nu_3 & 0 \\
 0 & 0 & 0 & \nu_4
 \end{bmatrix} \!\! . 
$$
The $3 \times 3$-matrix on the left is the familiar
parametrization of ${\rm SO}_3$ via unit quaternions.
This gives the parametrization of the fradeco variety 
$\mathcal{T}_{4,3,d} $ seen in (\ref{eq:ex434c}).
\hfill $\diamondsuit$
\end{example}

The embedding of $\mathcal{F}_{r,n}/{\rm SO}_n$ into  ${\rm Gr}(r,n)$ via (\ref{eq:sum2})
connects  frame theory with {\em matroid theory}.
The matroid of $V$ is given by the set of Pl\"ucker coordinates
$p_I$ that are zero. If all Pl\"ucker coordinates
are nonzero, then the matroid is uniform.
It is a natural to ask which matroids 
are realizable over $\RR$ when the additional constraints 
(\ref{eq:sum2}) are imposed. 

The discussion in \cite[Section 3.2]{CStr}
relates frame theory to the study of {\em orbitopes} \cite{SSS}.
Cahill and Strawn set up an optimization problem for computing
Parseval frames that are most uniform. Their formulation in \cite[p.~24]{CStr} 
is a linear program over the {\em Grassmann orbitope}, which is
the convex hull of ${\rm Gr}(n,r)$ intersected with (\ref{eq:sum1}).
The same optimization problem makes sense with ${\rm Gr}(n,r)$ replaced by 
$\mathcal{F}_{r,n}/{\rm SO}_n$,
or, algebraically, with (\ref{eq:sum1}) replaced by (\ref{eq:sum2}). 
If $n=2$, then the former problem is a {\em semidefinite program}.
This is the content of \cite[Theorem 7.3]{SSS}. For $n \geq 3$, the situation 
is more complicated, but the considerable body of results coming from
calibrated manifolds, such as \cite[Theorem 7.5]{SSS}, should still be helpful.

\section{Binary forms}
\label{sec3}

We now commence our study of the fradeco variety $\mathcal{T}_{r,n,d}$.
In this section we focus on the case $n=2$ of binary forms
that are decomposable into small frames.
The case $r=2$ is the odeco surface known from \cite[\S 3]{Rob}.
Proposition 3.6 in \cite{Rob} gives an explicit list
of quadrics that forms a Gr\"obner basis for the prime ideal
of $\mathcal{T}_{2,2,d}$,
and these are here expressed as the
$2 \times 2$-minors of a certain $3 \times (d-3)$-matrix $\mathcal{M}_{4}$.
What follows is our main result in Section~\ref{sec3}.
We are using coordinates $(t_0:\cdots:t_d)$ for
the space $\PP^d  = \PP({\rm Sym}_d (\CC^2))$ of binary forms.
In the notation of (\ref{eq:generalp}), the coordinate
$t_i$ would be $t_{111\cdots1222\cdots2}$
with $i$ indices $1$ and $d-i$ indices~$2$.

\begin{theorem}
\label{thm:sec3main}
Fix $r \in \{3,4,\ldots,9\}$ and $d\geq 2r-2$. There exists a matrix
$\mathcal{M}_{r}$ such that:
\begin{itemize}
\vspace{-0.05in}
\item[(a)] Its maximal minors form a
Gr\"obner basis for the prime ideal of $\,\mathcal{T}_{r,2,d}$. \vspace{-0.1in}
\item[(b)] It has $r-1$ rows and $d-r+1$ columns, and the entries are linear
forms in $t_0,\ldots,t_d$. \vspace{-0.1in}
\item[(c)] Each column involves $r$ of the unknowns $t_i$, and they
are identical up to index shifts.
\end{itemize}
These matrices can be chosen as follows:
\begin{equation}
\label{eq:M_{3}}
\mathcal{M}_{3} \quad  = \quad
\begin{pmatrix}
t_0-3t_2 & t_1-3t_3 & t_2-3t_4 &  t_3 - 3 t_5 & \cdots & t_{d-3} - 3 t_{d-1} \\
3 t_1-t_3 & 3 t_2-t_4 & 3t_3-t_5 & 3 t_4 - t_6 & \cdots &  3t_{d-2}- t_d \\
\end{pmatrix}
\end{equation}
\begin{equation}
\label{eq:M_{4}}
\mathcal{M}_{4} \quad  = \quad
\begin{pmatrix}
\,t_0 + t_4 \,&  \,t_1 + t_5\, & \, t_2 + t_6 \,&\, t_3 + t_7\, & \cdots &\, t_{d-4} + t_d \, \\
\,t_1 - t_3  \,& \, t_2 - t_4 \, & \,t_3 - t_5 \,&\,   t_4 - t_6 \,& \cdots  & \,t_{d-3} + t_{d-1} \, \\
     t_2     &      t_3      &      t_4     &  t_5  & \cdots  &  t_{d-2} 
\end{pmatrix}
\end{equation}
\begin{equation}
\label{eq:M_{5}}
\mathcal{M}_{5} \quad  = \quad 
\begin{pmatrix}
t_0 + 5 t_2 & t_1 + 5 t_3 & t_2 + 5 t_4 & t_3 + 5 t_5 & \cdots & t_{d-5} + 5 t_{d-3} \\
t_1 - 3 t_3 & t_2 - 3 t_4   & t_3 - 3 t_5 & t_4 - 3 t_6 &  \cdots & t_{d-4} - 3 t_{d-2} \\
3 t_2 - t_4 & 3 t_3 - t_5   & 3 t_4 - t_6 & 3 t_5 - t_7 & \cdots  & 3 t_{d-3} - t_{d-1} \\
5 t_3 + t_5 &  5 t_4 + t_6  &  5 t_5 + t_7 &  5 t_6 + t_8 &  \cdots & 5 t_{d-2} + t_d
\end{pmatrix}
\end{equation}
\begin{equation}
\label{eq:M_{6}}
\mathcal{M}_{6} \quad  = \quad
\begin{pmatrix}  
t_0 + 3 t_2 & t_1 + 3 t_3 & t_2 + 3 t_4 & t_3 + 3 t_5 & \cdots & t_{d-6} + 3 t_{d-4} \\
 t_1 + t_5   &  t_2 + t_6   &  t_3 + t_7   &  t_4 + t_8     & \cdots &  t_{d-5} + t_{d-1}   \\
 t_2  - t_4   &  t_3  - t_5   &  t_4  - t_6   &  t_5  - t_7    &  \cdots &  t_{d-4} - t_{d-2}  \\
    t_3         &       t_4       &       t_5        &       t_6       &      \cdots   &  t_{d-3} \\
    3t_4 + t_6   &  3t_5 +  t_7   &  3t_6 +  t_8   &  3t_7 +  t_9    & \cdots &  3t_{d-2} +  t_d   \\
\end{pmatrix}
\end{equation}
The first column of $\mathcal{M}_7$ is
$(3 t_0 + 7 t_2, t_1 + 5 t_3, t_2 - 3 t_4, 3 t_3 - t_5, 5 t_4 + t_6, 7 t_5 + 3 t_7)^T$, 
the first column of $\mathcal{M}_8$ is
$(t_0 + 2 t_2, t_1 + 3 t_3, \,t_4,\, t_3 - t_5, t_2 + t_6, 3 t_5 + t_7, 2 t_6 + t_8)^T$,
and the first column of $\mathcal{M}_9$ is
$(5 t_0+9t_2, 3 t_1 + 7 t_3, t_2 + 5 t_4 , t_3 - 3 t_5, 3 t_4-t_6, 5 t_5 + t_7, 7 t_6+3t_8,
9t_7+5t_9)^T$.
\end{theorem}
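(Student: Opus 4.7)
The plan is to prove the three claims for each $r \in \{3, \ldots, 9\}$ separately. Parts (b) and (c) are manifest from inspection of the given matrices $\mathcal{M}_3, \ldots, \mathcal{M}_9$. The substance is part (a), which I would break into three steps.

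\emph{Step 1 (Containment).} I would show that for every $T \in \mathcal{T}_{r,2,d}$, the matrix $\mathcal{M}_r(T)$ has rank at most $r-2$. The key is a factorization. Under the parametrization
\[ t_k \;=\; \sum_{j=1}^r \lambda_j \, v_{1j}^k \, v_{2j}^{d-k}, \qquad (V, \lambda) \in \mathcal{F}_{r,2} \times \CC^r, \]
the matrix $\mathcal{M}_r$ satisfies $\mathcal{M}_r = P \cdot W$, where $W$ is the $r \times (d-r+1)$ Vandermonde-type matrix with $W_{j,i} = \lambda_j v_{1j}^i v_{2j}^{d-r-i}$, and $P$ is an $(r-1) \times r$ matrix with $P_{k,j} = p_k(v_{1j}, v_{2j})$, the values of $r-1$ explicit binary forms $p_k$ of degree $r$ (read off from the coefficient pattern of each row of $\mathcal{M}_r$). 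It suffices to show $\mathrm{rank}(P) \leq r - 2$ on $\mathcal{F}_{r,2}$. Parametrizing $v_j = (\cos\theta_j, \sin\theta_j)$, each $p_k(v_j)$ is a trigonometric polynomial in $\theta_j$, and the entries of $\mathcal{M}_r$ are tuned so that the funtf identities $\sum_j e^{\pm 2i\theta_j} = 0$ force an additional linear dependence among the rows of $P$. For $r = 3$ this is transparent: $p_1(v_j) = -\sin 3\theta_j$ and $p_2(v_j) = -\cos 3\theta_j$, while the funtf condition forces the three points $e^{2i\theta_j}$ to be the vertices of an equilateral triangle, so $e^{3i\theta_j} = (-1)^{j-1} e^{3i\theta_1}$ and the three columns of $P$ are $\pm$ copies of a single $2$-vector, giving $\mathrm{rank}(P) \leq 1 = r - 2$. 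For $r = 4, \ldots, 9$ the analogous identities are more intricate and would be verified case by case, likely assisted by computer algebra.

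\emph{Step 2 (Equality).} Each $\mathcal{M}_r$ has linear entries and a Hankel-type shift structure, making it $1$-generic in the sense of Eisenbud. By the Eagon--Northcott theorem, $I_{r-1}(\mathcal{M}_r)$ is a prime Cohen--Macaulay ideal cutting out an irreducible variety of expected codimension $(d-r+1) - (r-1) + 1 = d - 2r + 3$ in $\PP^d$, hence of dimension $2r - 3$. On the other hand, the parametrization of $\mathcal{T}_{r,2,d}$ is generically finite-to-one (modulo the finite symmetries of reordering the columns of $V$ and sign flips), so by Theorem~\ref{thm:strawn},
\[ \dim \mathcal{T}_{r,2,d} \;=\; \dim \mathcal{F}_{r,2} + r - 1 \;=\; (r-2) + (r-1) \;=\; 2r - 3. \]
Two irreducible varieties of the same dimension, one contained in the other, must coincide.

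\emph{Step 3 (Gr\"obner basis).} For a $1$-generic linear matrix, the maximal minors form a Gr\"obner basis under a suitable diagonal (or reverse-lexicographic) term order; the shift structure of $\mathcal{M}_r$ makes the initial terms explicit as products along the main diagonals of the submatrices. One checks that the resulting squarefree monomial initial ideal has the same Hilbert function as the ideal of maximal minors (which is computed by the Eagon--Northcott complex). This can be verified directly for each $r \in \{3, \ldots, 9\}$.

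\emph{Main obstacle.} Step 1 for $r \geq 4$ is the conceptually hardest. The funtf condition provides only the two scalar identities $\sum_j e^{\pm 2i\theta_j} = 0$, and forcing a rank drop of $P$ from $r-1$ to $r-2$ requires specific Chebyshev-type identities among the $p_k(\cos\theta, \sin\theta)$. The matrices $\mathcal{M}_r$ were presumably discovered by computer experimentation, which accounts for the restriction $r \leq 9$ in the statement. A secondary technical challenge is verifying $1$-genericity of each $\mathcal{M}_r$, which is delicate since the matrix is far from having generic entries.
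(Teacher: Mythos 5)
Your overall scaffolding (show vanishing, match dimensions, verify Gr\"obner basis) mirrors the paper, but each of your three steps diverges from the paper's actual argument, and two of them have real gaps.

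\emph{Step 1.} For $r=3$ your observation is correct, but the mechanism you propose (a uniform linear dependence among the rows of $P$, i.e.\ the columns being parallel up to sign) does not persist for $r\geq 4$: the matrix $M_r$ is $(r-1)\times r$, and the rank drop to $r-2$ is a \emph{pointwise} phenomenon, with the left kernel vector varying over $\mathcal{F}_{r,2}$. The paper's actual mechanism is a determinant factorization: the leftmost $(r-1)\times(r-1)$ minor of $M_r$ factors as
$(m_{11}m_{22}-m_{12}m_{21})\cdot\prod_{1\leq i<j\leq r-1}(v_{1i}v_{2j}-v_{2i}v_{1j})$,
where the first factor is the eliminant polynomial from \eqref{eq:eliminant}, which vanishes on $\mathcal{G}_{r,2}$. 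All $r$ maximal minors vanish because they are related by permuting the columns of $V$. Your ``Chebyshev identity'' picture of forced row-dependence is not the right model of what is happening for $r\geq 4$, even if you outsource the computations.

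\emph{Step 2.} Invoking $1$-genericity plus the Eisenbud theorem on determinantal ideals of $1$-generic matrices (not Eagon--Northcott, which only yields Cohen--Macaulayness given the right codimension, not primality) would indeed give primality of $I_{r-1}(\mathcal{M}_r)$ in one stroke, and in that sense your plan is a shortcut. But $1$-genericity is never verified, and you yourself flag it as delicate; the repeated variables in $\mathcal{M}_r$ (e.g.\ $t_2$ in three different rows of $\mathcal{M}_4$) make it far from obvious. The paper avoids this entirely: it establishes that $I_{r-1}(\mathcal{M}_r)$ is \emph{radical} with the expected codimension (by computing the lex initial ideal, identifying it with a squarefree monomial ideal via Murai's stable operator theory, and checking shellability), concludes that $\mathcal{T}_{r,2,d}$ is one irreducible \emph{component} of $V(I_{r-1}(\mathcal{M}_r))$, and then shows irreducibility of the whole determinantal variety by a separate induction on $d$ using the Hankel structure. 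If you could actually establish $1$-genericity for each $r\leq 9$, this portion of your argument would be cleaner than the paper's, but that burden is unmet.

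\emph{Step 3.} There is no general theorem of the shape ``maximal minors of a $1$-generic matrix of linear forms form a Gr\"obner basis under a diagonal order.'' The Sturmfels--Zelevinsky universal Gr\"obner basis result is for matrices of distinct indeterminates, and the $\mathcal{M}_r$ are far from that. The paper does the honest work here: it fixes the lexicographic order, identifies the leading term of the leftmost minor as $t_0 t_2\cdots t_{r-2}$, writes down the resulting monomial ideal explicitly, and verifies the Gr\"obner basis property by Buchberger's S-pair criterion, reducing to finitely many symmetry classes of pairs. Your step, as written, asserts a theorem that is not available.

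In short: your high-level plan is reasonable, but Step 1's mechanism is wrong for $r\geq 4$, Step 2 rests on an unverified and nontrivial $1$-genericity hypothesis, and Step 3 cites a nonexistent general result. Any corrected proof along your lines would have to replace the row-dependence picture with the paper's determinant factorization and do explicit initial-ideal combinatorics (or genuinely verify $1$-genericity) in place of the black boxes.
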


We conjecture that the same result holds for all $r$,
and we explain what we currently know after the proof.
Let us begin with a lemma concerning the dimension of our variety.

\begin{lemma} \label{lem:fradim}
The fradeco variety $\mathcal{T}_{r,2,d}$ is
irreducible and has dimension ${\rm min}(2r-3,d)$.
\end{lemma}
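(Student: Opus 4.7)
The variety $\mathcal{T}_{r,2,d}$ is, by construction, the Zariski closure of the image of the regular map
\[
\phi\colon\mathcal{F}_{r,2}\times \CC^r \,\longrightarrow\, \mathbb{P}^d\,=\,\mathbb{P}(\mathrm{Sym}_d(\CC^2)),\qquad (V,\lambda)\,\longmapsto\,\Bigl[\,\sum_{j=1}^r\lambda_j\, (v_{1j}x_1+v_{2j}x_2)^d\,\Bigr].
\]
By Theorem~\ref{thm:strawn}, the domain has dimension $(r-2)+r=2r-2$. Since $\phi$ is homogeneous of degree one in $\lambda$, the image of the corresponding affine map into $\mathrm{Sym}_d(\CC^2)\cong\CC^{d+1}$ is a cone with apex $0$, so projectivising loses one dimension. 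Combined with $\dim\mathcal{T}_{r,2,d}\leq d$, this yields the upper bound $\dim\mathcal{T}_{r,2,d}\leq\min(2r-3,d)$.

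For irreducibility there are two cases. When $r\geq 5$, the funtf variety $\mathcal{F}_{r,2}$ is itself irreducible (Table~\ref{tab:one}), so the image of the irreducible variety $\mathcal{F}_{r,2}\times\CC^r$ under $\phi$ is irreducible. When $r\in\{3,4\}$, the funtf variety decomposes into $8$ respectively $12$ irreducible components (Example~\ref{eq:vierzwei} and Table~\ref{tab:one}), which are transitively permuted by the $(\mathbb{Z}/2)^r\rtimes S_r$-action sign-flipping and permuting columns of $V$. This action preserves $\phi$ under the compensating operations $\lambda_j\mapsto(-1)^d\lambda_j$ (for sign flips) and the matching permutation of entries of $\lambda$ (for column permutations). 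Consequently every component of $\mathcal{F}_{r,2}\times\CC^r$ maps onto the same subvariety of $\mathbb{P}^d$, and $\mathcal{T}_{r,2,d}$ equals the image-closure of a single irreducible component.

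For the lower bound on dimension it suffices, by the lower semicontinuity of rank, to exhibit a point $(V_0,\lambda_0)\in\mathcal{F}_{r,2}\times\CC^r$ at which $d\phi$ attains rank $\min(2r-2,d+1)$. The image of $d\phi_{(V_0,\lambda_0)}$ is spanned by the $r$ pure powers $(\mathbf{v}_j^{(0)})^{\otimes d}$ (the $\partial/\partial\lambda_j$ contributions) together with $r-2$ apolar vectors
\[
\sum_{j=1}^r\lambda_{0,j}\,(\dot v_{1j}x_1+\dot v_{2j}x_2)\,(\mathbf{v}_j^{(0)})^{\otimes(d-1)},\qquad \dot V\in T_{V_0}\mathcal{F}_{r,2},
\]
from the $(r-2)$-dimensional tangent space $T_{V_0}\mathcal{F}_{r,2}$. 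I would take $V_0$ to be the symmetric frame with columns $\mathbf{v}_j^{(0)}=(\cos(\pi(j-1)/r),\sin(\pi(j-1)/r))$ (directly verified to satisfy~\eqref{eq:funtf}) and $\lambda_0$ Zariski-generic; the cyclic rotational symmetry of $V_0$ block-diagonalises $d\phi$ against the characters of $\mathbb{Z}/r$ and reduces the linear-independence check to a handful of per-character determinantal identities. For $d\geq 2r-3$ the $2r-2$ candidate vectors then remain linearly independent, giving projective image dimension $2r-3$; for $d<2r-3$ they already span $\mathrm{Sym}_d(\CC^2)$, giving $\mathcal{T}_{r,2,d}=\mathbb{P}^d$. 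The main obstacle is verifying that the two independent funtf constraints cut the Terracini tangent space at $\phi(V_0,\lambda_0)$ transversely --- equivalently, that the codimension-$2$ restriction from $(\mathbb{P}^1)^r$ to $\mathcal{G}_{r,2}$ is not absorbed by the column-scaling symmetries $(\mathbf{v}_j,\lambda_j)\mapsto(c_j\mathbf{v}_j,c_j^{-d}\lambda_j)$ of the unrestricted Waring parametrisation.
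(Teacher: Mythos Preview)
Your upper bound and your irreducibility argument are fine. For $r\in\{3,4\}$ your symmetry-transitivity argument is a valid alternative to the paper's route, which instead observes that although $\mathcal{F}_{3,2}$ and $\mathcal{F}_{4,2}$ are reducible, their projectivisations $\mathcal{G}_{3,2}$ and $\mathcal{G}_{4,2}$ in $(\PP^1)^r$ are irreducible, and then takes the image of the irreducible cone over $\mathcal{G}_{r,2}$.

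The genuine gap is in the lower bound. You set up a differential computation at the harmonic frame $V_0$, but you do not carry it out: you write ``I would take\ldots'' and then explicitly flag ``the main obstacle'' --- the transversality of the two funtf constraints to the Terracini tangent space --- as unresolved. That transversality is the entire content of the lower bound, and nothing in your write-up establishes it; the cyclic-symmetry block-diagonalisation you allude to is a plausible tool but is not executed. As written, the argument is a sketch of a strategy rather than a proof.

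The paper sidesteps this computation entirely. It uses two classical facts about the rational normal curve: the secant variety $\sigma_r\nu_d\PP^1$ has the expected dimension $\min(2r-1,d)$, and the fibre dimension of the Waring map $\CC^{2\times r}\dashrightarrow\PP^d$ is constant on the locus where the $r$ columns of $V$ are pairwise non-proportional. Since $\mathcal{G}_{r,2}$ has codimension~$2$ in $(\PP^1)^r$ (Lemma~\ref{lem:multilinear}) and meets that open locus, restricting the domain drops the image dimension by exactly~$2$, giving $\min(2r-3,d)$ with no tangent-space calculation. This is precisely the transversality you were trying to verify by hand, but obtained for free from the known fibre structure of the unrestricted Waring parametrisation.
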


\begin{proof}
For $d \geq 5$, the funtf variety $\mathcal{F}_{r,2} \subset  (\mathbb{S}^1)^r$
is irreducible, by Theorem~\ref{thm:strawn},
and hence so is its closure $\mathcal{G}_{r,2}$ in $(\mathbb{P}^1)^r$.
While the two special varieties $\mathcal{F}_{3,2}$
and $\mathcal{F}_{4,2}$ are reducible, the analyses in
Example \ref{eq:vierzwei} and Corollary \ref{cor:quotientspace}
show that $\mathcal{G}_{3,2}$
and $\mathcal{G}_{4,2}$ are irreducible.

Regarding $\mathcal{G}_{r,2}$ as an affine variety in $\CC^{2\times r}$,
we obtain $\mathcal{T}_{r,2,d}$ as its image under the map
\begin{equation}
\label{eq:para_n=2}  t_i \,\,= \,\,
v_{11}^i v_{21}^{d-i} + 
v_{12}^i v_{22}^{d-i} + 
v_{13}^i v_{23}^{d-i} + 
\cdots + 
v_{1r}^i v_{2r}^{d-i} \qquad \hbox{for}\,\,\, i = 0,1,\ldots,d.
\end{equation}
This proves that $\mathcal{T}_{r,2,d}$ is irreducible.
To see that it has the expected dimension, consider the 
$r$-th secant variety of the rational normal curve in $\mathbb{P}^d$, which 
is the image of the map $\CC^{2 \times r} \dashrightarrow
\mathbb{P}^d$ given by (\ref{eq:para_n=2}).
It is known that this secant variety has the expected dimension,
namely ${\rm min}(2r-1,d)$, and the fiber dimension of the map (\ref{eq:para_n=2}) 
does not jump unless some $2 \times 2$-minor of $V = (v_{ij})$ is zero.
Since ${\rm codim}(\mathcal{G}_{r,2}) = 2$, by Lemma~\ref{lem:multilinear},
the claim follows.
\end{proof}

\begin{proof}[Proof of Theorem \ref{thm:sec3main}] We first show that the maximal minors of 
our matrices $\mathcal M_r$ vanish on the fradeco variety $\mathcal{T}_{r,2,d}$
for $r = 3,4,\ldots,9$. After substituting the parametrization (\ref{eq:para_n=2}) for
$t_0,t_1,\ldots,t_d$, we can decompose these matrices as follows:
$$
\mathcal M_r \,\,=\,\, M_r \cdot \begin{pmatrix}
v_{11}^{d-r} & v_{11}^{d-r-1}v_{21} & v_{11}^{d-r-2} v_{21}^2 & \cdots & v_{21}^{d-r}\\
v_{12}^{d-r} & v_{12}^{d-r-1}v_{22} & v_{12}^{d-r-2} v_{22}^2 & \cdots & v_{22}^{d-r}\\
 \vdots & \vdots & \vdots &  \ddots & \vdots \\
v_{1r}^{d-r} & v_{1r}^{d-r-1}v_{2r} & v_{1r}^{d-r-2} v_{2r}^2 & \cdots & v_{2r}^{d-r}
\end{pmatrix},
$$
where
$$ M_3 \,\,=\,\, \begin{pmatrix} 
(v_{22}^2-3v_{11}^2)v_{21} & (v_{22}^2 - 3v_{12}^2) v_{22} & (v_{23}^2 - 3v_{13}^2) v_{23} \\
(3v_{21}^2 - v_{11}^2)v_{11} & (3v_{22}^2 - v_{12}^2)v_{12} & (3v_{23}^2 - v_{13}^2)v_{13}
\end{pmatrix},
$$
$$
M_4 = \begin{pmatrix} 
v_{21}^4 + v_{11}^4 & v_{22}^4 + v_{12}^4 & v_{23}^4 + v_{13}^4 & v_{24}^4 + v_{14}^4 \\
v_{11}v_{21}^3 - v_{11}^3 v_{21} & v_{12}v_{22}^3 - v_{12}^3 v_{22} & 
v_{13}v_{23}^3 - v_{13}^3 v_{23} & v_{14}v_{24}^3 - v_{14}^3 v_{24}\\
v_{11}^2v_{21}^2 & v_{12}^2v_{22}^2 & v_{13}^2v_{23}^2 & v_{14}^2v_{24}^2
\end{pmatrix},
$$
$$
\begin{small}
M_5 = \begin{pmatrix} v_{21}^5 + 5v_{11}^5 & v_{22}^5 
+ 5v_{12}^5 & v_{23}^5 + 5v_{13}^5 & v_{24}^5 + 5v_{14}^5 & v_{25}^5 + 5v_{15}^5\\
v_{11}v_{21}^4 - 3v_{11}^3v_{21}^2 & v_{12}v_{22}^4 - 3v_{12}^3v_{22}^2 & v_{13}v_{23}^4 - 
3v_{13}^3v_{23}^2 & v_{14}v_{24}^4 - 3v_{14}^3v_{24}^2 & v_{15}v_{25}^4 - 3v_{15}^3v_{25}^2\\
3v_{11}^2v_{21}^3 - v_{11}^4v_{21} & 3v_{12}^2v_{22}^3 - v_{12}^4v_{22} & 3v_{13}^2v_{23}^3 - v_{13}^4v_{23} & 3v_{14}^2v_{24}^3 - v_{14}^4v_{24} & 3v_{15}^2v_{25}^3 - v_{15}^4v_{25} \\
5v_{11}^3v_{21}^2 + v_{11}^5 & 5v_{12}^3v_{22}^2 + v_{12}^5 & 5v_{13}^3v_{23}^2 
+ v_{13}^5 & 5v_{14}^3 v_{24}^2 + v_{14}^5 & 5v_{15}^3v_{25}^2 + v_{15}^5
\end{pmatrix}\!,
\end{small}
$$
and similarly for $M_6,M_7, M_8$ and $M_9$. We claim that 
the matrices $M_r$ have rank $<r-1$ whenever $V\in\mathcal F_{r, 2}$.
Equivalently, the $(r-1)\times (r-1)$ minors of $M_r$ lie in the ideal of $\mathcal G_{r, 2}$. 
It suffices to consider the leftmost such minor since 
all minors are equivalent under permuting the columns of $V$.
For each $r \leq 9$, we check that the determinant of that minor factors as
\begin{equation}
\label{eq:amazingfact}
 (m_{11} m_{22} - m_{12} m_{21}) \,\,\cdot \!\!
\prod_{1 \leq i < j \leq r-1}  (v_{1 i} v_{2j} - v_{2i} v_{1j} )  ,
\end{equation}
where the left factor is the polynomial of degree $2r-2$ given in (\ref{eq:eliminant}).
That polynomial vanishes on  $\mathcal{G}_{r,2}$.
This implies ${\rm rank}(M_r) \leq  r{-}2$ on $\mathcal{G}_{r,2}$,
and hence ${\rm rank}(\mathcal{M}_r) \leq r{-}2$ on $\mathcal{T}_{r,2,d}$.

Fix the lexicographic term order on $\CC[t_0,t_1,\ldots,t_d]$.
We can check that, for each $r\in\{3, 4, ..., 9\}$, the leading monomial of the 
leftmost maximal minor of $\mathcal{M}_r$ equals $t_0 t_2 t_4 \cdots t_{r-2}$.
Hence all $\binom{d-r+1}{r-1}$ maximal minors of $\mathcal{M}_r$ are
squarefree, and they generate the ideal
$$ I_{r,d}\,\,= \,\, \bigl\langle \,
t_{i_1} t_{i_2} t_{i_3} \cdots t_{i_{r-1}} \,\,:\,\,
2 \leq i_1 {+} 2 \leq i_2, \,i_2 {+} 2 \leq i_3,\,
i_3 {+} 2 \leq t_4,\, \ldots, \,i_{r-2} {+} 2 \leq i_{r-1} \leq d-2 \,
\bigr\rangle.
$$
This squarefree monomial ideal is pure of codimension $d-2r+3$ and 
it has degree  $\binom{d-r+1}{r-2} $.
This follows from \cite[Theorem 1.6]{Murai}. Indeed, in Murai's theory,
our ideal $I_{r,d}$ is obtained from the power of the maximal ideal by
applying the stable operator given by $a = (2,4,6,\ldots)$.

Combinatorial analysis reveals that the ideal $I_{r,d}$ is the intersection of the prime ideals
$$ \bigl\langle
t_{j_0} , t_{j_1}, t_{j_2}, t_{j_3} , \ldots, t_{j_{d-2r+2}} 
\bigr\rangle,
$$
where $\,j_0,j_2,j_4, \ldots $ are even, 
$\,j_1,j_3, j_5, \ldots $ are odd, and
$0 \leq j_0 {<} j_1 {<} j_2 {<} \cdots {<} j_{d-2r+2} \leq d$.
Note that number of such sequences is
$\binom{d-r+1}{d-2r+3}  = \binom{d-r+1}{r-2} $. Hence
the codimension and degree of $I_{r,d}$ are 
as expected for the ideal of maximal minors
of an $(r-1) \times (d-r+1)$-matrix with linear entries \cite[Ex.~19.10]{H}. 
The monomial ideal $I_{r,d}$ is Cohen-Macaulay because its corresponding
simplicial complex is shellable
(cf.~\cite[\S III.2]{Stanley}). Indeed, if we list
the associated primes in a dictionary order for all
sequences $\, j_0 j_1 j_2   \cdots j_{d-2r+2} \,$ as above,
then this gives a shelling order.

Using Buchberger's S-pair criterion, we check that the
maximal minors of $\mathcal{M}_r$ form a Gr\"obner basis.
We only need to consider pairs of minors whose leading terms
share variables. Up to symmetry, there are only few such pairs, so this is an easy check for 
each fixed $r \leq 9$.

Since $I_{r,d}$ is radical of codimension $d-2r+3$, 
we conclude that the ideal of maximal minors of $\mathcal M_r$
is radical and has the same codimension. However, that
ideal of minors is contained in the prime ideal of $\mathcal{T}_{r,2,d}$,
which has codimension $d-2r+3$ by Lemma \ref{lem:fradim}.

Therefore, we now know that $\mathcal{T}_{r,2,d}$ is
one of the irreducible components of the variety of
maximal minors of $\mathcal{M}_r$. To conclude the proof
we need to show that the latter variety is irreducible, so they are equal.
To see this, we fix $r$ and we proceed by induction on $d$.
For $d = 2r-2$, when $\mathcal{M}_r$ is a square matrix, 
this can be checked directly. To pass from $d$ to $d+1$,
we factor the matrix as $M_r$ times the rank $r$
Hankel matrix associated with a funtf $V$.
Increasing the value of $d$ to $d+1$ multiplies the $i$-th row
of the Hankel matrix by  $v_{i1}$ and it adds one more column.
This gives us the value for the new variable $t_{d+1}$.
Now, since that variable occurs linearly in the
maximal minors, its value is unique. This implies that
the unique rank $r-2$ extension from the old to the new $\mathcal{M}_r$
must come from the funtf $V$.
\end{proof}

We established Theorem \ref{thm:sec3main} assuming that $r \leq 9$,
but we believe that it holds for all $r$:

\begin{conjecture} For all $r \geq 3$ there exists a matrix 
$\mathcal{M}_{r}$ which satisfies the properties (a),(b) and (c) in Theorem \ref{thm:sec3main}.
When $r$ is odd, the matrix $\mathcal M_r$ is given by
\tiny
\begin{align*}
 \begin{pmatrix} r{-}4 & & & &&&&\\
& \!\!\! r{-}6 \!\! & & &&&&\\
&&\! \! \ddots\!&&&&&\\
& & & \! \!\! \!\! -1 \!\!\! & &&&\\
& &&&\! 3 \!&&& \\
&&&&& \! 5\! &&\\
&&&&&&\!\!\! \ddots & \\
&&&&&&\!\! &\!\!\! r\,
\end{pmatrix}\begin{pmatrix}t_0 & t_1 & \cdots & t_{d-r}\\
t_1 & t_2 & \cdots & t_{d-r+1}\\
\vdots &\vdots & \ddots & \vdots \\
t_{r-2}& t_{r-1}& \cdots & t_{d-2}
\end{pmatrix} 
+ \begin{pmatrix} r & & & &&&&\\
& \!\!\! r{-}2 & & &&&&\\
&&\ddots&&&&&\\
& & \!\!& \!\!3 \!& &&&\\
& &&\!&\!\!\!-1\!&&& \\
&&&&& \!1 &&\\
&&&&&\!&\!\!\ddots & \\
&&&&&&\!\!&\!\! \!\! r{-}4\,
\end{pmatrix}\begin{pmatrix}t_2 & t_3 & \cdots & t_{d-r+2}\\
t_3 & t_4 & \cdots & t_{d-r+3}\\
\vdots & \vdots & \ddots & \vdots \\
t_{r}& t_{r+1}& \cdots & t_{d}
\end{pmatrix}.
\end{align*}
\end{conjecture}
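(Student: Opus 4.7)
The strategy is to extend each step of the proof of Theorem \ref{thm:sec3main} to arbitrary $r$. We take $\mathcal{M}_r$ as given in the conjecture for odd $r$; for even $r$ one first pattern-matches from $\mathcal{M}_4, \mathcal{M}_6, \mathcal{M}_8$ to write down an analogous weighted sum of two shifted Hankel matrices in the $t_i$. In both cases the crucial structural fact is the factorization $\mathcal{M}_r = M_r \cdot H$, where $H$ is the $r \times (d-r+1)$ Hankel-type matrix with entries $v_{1i}^{d-r-k} v_{2i}^k$ and $M_r$ is an $(r-1) \times r$ matrix whose entries are degree-$r$ forms in the columns of $V$. This reduces the vanishing of the maximal minors of $\mathcal{M}_r$ on $\mathcal{T}_{r,2,d}$ to showing that $\mathrm{rank}(M_r) \leq r-2$ on the projective funtf variety $\mathcal{G}_{r,2}$.

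The heart of the argument is the amazing factorization (\ref{eq:amazingfact}): the leftmost $(r-1) \times (r-1)$ minor of $M_r$ factors as $(m_{11}m_{22} - m_{12}m_{21}) \cdot \prod_{1 \leq i < j \leq r-1}(v_{1i}v_{2j} - v_{2i}v_{1j})$. I would attack this by a vanishing-locus argument. Whenever two columns of $V$ with indices $i, j \leq r-1$ are proportional, two rows of $H$ coincide and the corresponding minor of $\mathcal{M}_r$ must vanish; by a Cauchy--Binet-type reduction this vanishing transfers to the minor of $M_r$. So each bracket $v_{1i}v_{2j} - v_{2i}v_{1j}$ divides the minor, supplying the Vandermonde-like factor. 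A bidegree count, together with the $\mathrm{SO}_2$-equivariance of both sides (each column of $V$ carries a weight under the diagonal torus), then forces the residual factor to agree with the eliminant $m_{11}m_{22} - m_{12}m_{21}$ of (\ref{eq:eliminant}) up to a nonzero constant, which is fixed by a single specialization.

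Once (\ref{eq:amazingfact}) is in hand, the remaining arguments port over almost verbatim. The leading monomial of the leftmost minor of $\mathcal{M}_r$ under the lex order is $t_0 t_2 \cdots t_{r-2}$, readable from the fact that the diagonal weighting matrices in the conjecture have nowhere-zero diagonals. By Murai's theorem \cite[Theorem 1.6]{Murai} the resulting initial ideal $I_{r,d}$ is the squarefree monomial ideal generated by $t_{i_1} t_{i_2} \cdots t_{i_{r-1}}$ with $i_{k+1} \geq i_k + 2$, which has the expected codimension $d - 2r+3$ and degree $\binom{d-r+1}{r-2}$ and is Cohen--Macaulay by shellability. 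Buchberger's S-pair test, needing only a bounded and symmetry-reduced set of checks on pairs of minors whose leading terms share variables, then upgrades this to a Gr\"obner basis certificate. Irreducibility follows from Lemma \ref{lem:fradim} together with the induction on $d$ in the proof of Theorem \ref{thm:sec3main}: each new column of $\mathcal{M}_r$ is linear in a single new $t_{d+1}$, so the unique rank-drop extension is forced to come from a funtf.

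The main obstacle is proving the factorization identity (\ref{eq:amazingfact}) uniformly in $r$. For $r \leq 9$ it was checked by computer algebra, but a uniform proof seems to require a conceptual explanation for the specific coefficient patterns $(r-4, r-6, \ldots, -1, 3, 5, \ldots, r)$ and $(r, r-2, \ldots, 3, -1, 1, \ldots, r-4)$. A promising line of attack is to recognize these as expansion coefficients of $(v_{1} + iv_{2})^r \pm (v_1 - iv_2)^r$ in a suitable Hankel basis, which would identify the leftmost minor of $M_r$ with a resultant or Wronskian-type expression that displays both the Vandermonde factor (from collision of roots) and the eliminant factor (the defining polynomial of $\mathcal{G}_{r,2}$) simultaneously. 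A secondary, much smaller obstacle is pinning down the analogue of $\mathcal{M}_r$ for even $r$ and verifying directly the base case $d = 2r-2$ of the irreducibility induction.
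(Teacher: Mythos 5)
The statement you are attempting to prove is labeled a \emph{conjecture} in the paper, and the paper does not prove it; the authors explicitly say they do not even know the analogue of $\mathcal{M}_r$ for even $r$. What the paper offers instead is a heuristic \emph{construction}: form the products $F_j G_j$ of the eliminants and the complementary Vandermonde products, note that $\langle F_1G_1,\ldots,F_rG_r\rangle$ is Cohen--Macaulay of codimension $2$, and apply Hilbert--Burch to extract a candidate $(r-1)\times r$ matrix $M_r$; the obstruction they isolate is that nothing guarantees the entries of this Hilbert--Burch matrix all have degree exactly $r$. Your proposal approaches the same wall from the other side: you take the conjectured $\mathcal{M}_r$ as given, factor $\mathcal{M}_r = M_r\cdot H$, and try to verify the factorization identity \eqref{eq:amazingfact} directly. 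You correctly and honestly identify this as the central obstacle, and your Vandermonde step is sound --- each column of $M_r$ is a degree-$r$ form in $(v_{1j},v_{2j})$ alone, so proportional columns of $V$ force proportional columns of $M_r$ and the bracket $v_{1i}v_{2j}-v_{2i}v_{1j}$ divides the minor. The multidegree count $(r-2)+2 = r$ per column is also consistent.

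However, the passage from there to the conclusion is not yet a proof. The space of $\mathrm{SO}_2$-invariant multiforms of multidegree $(2,\ldots,2,0)$ on $(\mathbb{P}^1)^r$ is in general larger than one-dimensional, so degree plus equivariance alone do not pin the residual factor down to $m_{11}m_{22}-m_{12}m_{21}$. You would need an additional geometric input --- for instance, that the residual factor vanishes on the hypersurface $\{F_r=0\}$ (the projection of $\mathcal{G}_{r,2}$ to the first $r-1$ coordinates), which is exactly the fact one is trying to establish, or a direct algebraic identification via the Chebyshev-type observation you float at the end (which is plausible but unexecuted). Beyond that, three further pieces are asserted but not supplied: the even-$r$ matrix $\mathcal{M}_r$ is not actually defined (``pattern-match'' is not a construction), the Buchberger check is a per-$r$ finite computation in the paper and you give no uniform certificate, and the base case $d=2r-2$ of the irreducibility induction is not verified for general $r$. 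In short, your plan is a reasonable and well-organized roadmap that matches the structure of the paper's proof of Theorem~\ref{thm:sec3main} and locates the genuine open gap, but it does not close that gap --- which is exactly why the paper leaves this as a conjecture.
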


We do not know yet what the general formula for $\mathcal M_r$ should be when $r$ is even.
The following  systematic construction 
led to the matrices $M_r$ and $\mathcal{M}_r$ in all cases known to us.
 Let $\widetilde P$
and $\widetilde Q$ be the multilinear forms in Lemma \ref{lem:multilinear}
that define $\mathcal{G}_{r,2}$. 
Let $F_j$ denote  the polynomial of degree $2r-2$  obtained by
eliminating $v_{1j}$ and $v_{2j}$ from $\widetilde P$ and $\widetilde Q$.
Let $G_j$ denote the product of all $\binom{r-1}{2}$
minors $v_{1k} v_{2l} - v_{1l} v_{2k}$ of $V$ where
$j \not\in \{k,l\}$. Each product
$F_j G_j$ is a polynomial of degree
$r(r-1)$.
Note that $F_r $ is $m_{11} m_{22} - m_{12} m_{21}$ in
(\ref{eq:eliminant}), and
$F_r G_r$ is  (\ref{eq:amazingfact}).
Now, the ideal $\langle F_1 G_1, F_2 G_2, \ldots, F_r G_r \rangle $
is Cohen-Macaulay of codimension $2$. By the Hilbert-Burch Theorem,
the $F_j G_j$ are the maximal minors of an  $(r-1) \times r$-matrix $M_r$, which can be extracted from the minimal free resolution of $\langle F_1 G_1, \ldots, F_r G_r \rangle $.
This is precisely our matrix. In order to extend 
Theorem \ref{thm:sec3main}, and to find the desired
$\mathcal{M}_r$  for even~$r$, we need 
that all entries of the Hilbert-Burch matrix $M_r$ have the same degree $r$.

\begin{remark} \label{rem:howitrelates}     \rm
The singular locus of $\mathcal{T}_{r,2,d}$ is defined
by the $(r{-}2) \times (r{-}2)$-minors of $\mathcal{M}_r$.
It would be interesting to study this subvariety of $\mathbb{P}^d$
and how it relates to singularities of $\mathcal{F}_{r,2}$.
For instance, for $r=4$, this singular locus is precisely the
odeco variety $\mathcal{T}_{2,2,d} $, and,
using \cite[Proposition 3.6]{Rob}, we can see that its prime ideal
is generated by the $2 \times 2$-minors of $\mathcal{M}_4$.
\end{remark}

\smallskip
In Section \ref{sec5} we shall see how the matrices $\mathcal{M}_r$
can be used to find a frame decomposition of a given symmetric
$2 {\times} 2 {\times} \cdots {\times} 2$-tensor $T$.
We close with an example that shows how this task differs 
from the easier problem of constructing a rank $r$ Waring decomposition of $T$.

\begin{example} \label{ex:notcontain} \rm
Let $r = 4$, $d = 8$, and consider the sum of two odeco tensors
$$ T \,\,=\,\, x^8 + y^8 \,+\, (x-y)^8 + (x+y)^8 \,\,=\,\, 3 x^8 + 56 x^6 y^2 + 140 x^4 y^4 + 56 x^2 y^6 + 3 y^8. $$
The coordinates of this tensor are
$t_0 = t_8 = 3$, $ t_2 = t_4 = t_6 = 2$, and $t_1 = t_3 = t_5 = t_7 = 0$.
Here, the $3 \times 5$-matrix $\mathcal{M}_4$ has rank $2$.
This verifies  that $T$ lies in $\mathcal{T}_{4,2,8}$, in accordance
with Example \ref{eq:vierzwei}.
However, the $4 \times 4$-matrix $\mathcal{M}_5$ 
is invertible.
This means that $T$ does not lie in $\mathcal{T}_{5,2,8}$.
In other words, there is no funtf among the rank $5$ Waring decompositions of $T$.
\hfill $\diamondsuit$
\end{example}

\section{Ternary Forms and Beyond}
\label{sec4}

We now move on to higher dimensions $n \geq 3$.
Our object of study is the fradeco variety
$$ \mathcal{T}_{r,n,d} \,\subset \, \PP({\rm Sym}_d(\CC^n)) . $$
A very first question is: What is the dimension of $\mathcal{T}_{r,n,d}\,$?
In Lemma \ref{lem:fradim}, we saw that
$\,{\rm dim}(\mathcal{T}_{r,2,d}) = 2r-3$.
The following proposition generalizes that formula
to arbitrary $n$:

\begin{proposition}
For all $r >n$ and $d \geq 3$, the dimension of $\mathcal{T}_{r,n,d}$ is bounded above by
\begin{equation}
\label{eq:expdim} {\rm min} \left\{(n-1)(r-n) + \frac{(n-1)(n-2)}{2} + r-1\, ,\,\,
\binom{n+d-1}{d}-1 \,\right\} . 
\end{equation}
\end{proposition}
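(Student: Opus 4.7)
The plan is to exhibit $\mathcal{T}_{r,n,d}$ as the image of $\mathcal{F}_{r,n} \times \PP^{r-1}$ under a rational map, and then bound its dimension using Theorem~\ref{thm:strawn}.

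First I would introduce the parametrizing map
$$\Phi \colon \mathcal{F}_{r,n} \times \PP^{r-1} \dashrightarrow \PP\bigl({\rm Sym}_d(\CC^n)\bigr), \qquad \bigl(V,\,[\lambda_1 : \cdots : \lambda_r]\bigr) \longmapsto \Bigl[\,\sum_{j=1}^r \lambda_j {\bf v}_j^{\otimes d}\,\Bigr],$$
where ${\bf v}_j$ denotes the $j$-th column of $V$. This is well-defined on the projective factor because rescaling all scalars $\lambda_j$ by a common constant rescales the tensor. By the definition of the fradeco variety given in the introduction, $\mathcal{T}_{r,n,d}$ is precisely the Zariski closure of the image of $\Phi$. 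The standard bound $\dim(\text{image}) \leq \dim(\text{source})$ then yields
$$\dim \mathcal{T}_{r,n,d} \,\leq\, \dim \mathcal{F}_{r,n} + \dim \PP^{r-1} \,=\, \dim \mathcal{F}_{r,n} + r - 1.$$

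Next I would invoke Theorem~\ref{thm:strawn}, which asserts that $\dim \mathcal{F}_{r,n} = (n-1)(r - n/2 - 1)$ for all $r > n \geq 2$. A direct algebraic expansion shows
$$(n-1)\bigl(r - n/2 - 1\bigr) + r - 1 \,=\, (n-1)(r-n) \,+\, \frac{(n-1)(n-2)}{2} \,+\, r - 1,$$
giving the first quantity in the claimed minimum. The second bound is automatic: $\mathcal{T}_{r,n,d}$ is contained in the ambient projective space $\PP({\rm Sym}_d(\CC^n))$, which has dimension $\binom{n+d-1}{d} - 1$.

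There is no serious obstacle to this argument. The only mild subtlety is that Theorem~\ref{thm:strawn} must be applied even in the edge cases where $\mathcal{F}_{r,n}$ may be reducible (such as $r = n+1$ or $n = 2$); the dimension formula still holds on each irreducible component, so the upper bound on $\dim \mathcal{T}_{r,n,d}$ passes through without modification. A sharper statement characterizing when equality holds in (\ref{eq:expdim}) would require a tangent-space computation at a generic funtf together with an analysis of the generic fiber of $\Phi$, but neither is needed for the upper bound asserted here.
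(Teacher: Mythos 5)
Your proof is correct and follows essentially the same route as the paper: exhibit $\mathcal{T}_{r,n,d}$ as the closure of the image of the map from $\mathcal{F}_{r,n} \times \PP^{r-1}$, bound the dimension of the image by that of the source via Theorem~\ref{thm:strawn}, and note that the ambient projective space gives the second bound. The algebraic rewriting of $(n-1)(r - n/2 - 1) + r - 1$ into the form in (\ref{eq:expdim}) checks out, and your remark about reducible cases, while not needed, is accurate.
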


\begin{proof}
The right number is the dimension of the ambient space, so this is an upper bound.
The left number is the dimension of  $\mathcal{F}_{r,n} \times \PP^{r-1}$, by
the formula in Theorem~\ref{thm:strawn}. The formula
(\ref{eq:Trank2}) expresses our variety as the (closure of the) image of a polynomial map
\begin{equation}
\label{eq:hopefullybirational}
 \mathcal{F}_{r,n} \times \PP^{r-1} \,\longrightarrow \, \mathcal{T}_{r,n,d}  .
 \end{equation}
The dimension of the image of this map is bounded above by the
dimension of the domain.
\end{proof}

We conjecture that the true dimension always agrees with the expected dimension:

\begin{conjecture} \label{conj:dim}
The dimension of the variety  $\mathcal{T}_{r,n,d}$ is equal to (\ref{eq:expdim}) 
for all $r >n$ and $d \geq 3$.
\end{conjecture}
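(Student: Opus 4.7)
The plan is to verify the conjecture by showing that the parametrization
\[
\phi \colon \mathcal{F}_{r,n} \times \PP^{r-1} \longrightarrow \mathcal{T}_{r,n,d}
\]
from (\ref{eq:hopefullybirational}) has maximal rank differential at a generic point. By Theorem~\ref{thm:strawn}, $\dim(\mathcal{F}_{r,n} \times \PP^{r-1})$ equals the first term in (\ref{eq:expdim}). If $d\phi$ is injective generically (the case when the first term dominates the $\min$), the image has the expected dimension; if instead $d\phi$ is surjective generically (when the second term dominates), $\phi$ is dominant and $\mathcal{T}_{r,n,d}$ fills the ambient projective space, again matching (\ref{eq:expdim}).

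The heart of the matter is an explicit tangent space computation. At a smooth point $(V,\lambda)$, the tangent space $T_V \mathcal{F}_{r,n}$ is the linear space of $W \in \CC^{n\times r}$ satisfying $VW^T + WV^T = 0$ together with $v_j^T w_j = 0$ for each column $w_j$ of $W$. A direct calculation rewrites the projective differential of $\phi$ as
\[
d\phi_{(V,\lambda)}(W,[\mu]) \,\,=\,\, \sum_{j=1}^{r} v_j^{\otimes(d-1)}\cdot\bigl(\mu_j v_j + d\lambda_j w_j\bigr) \pmod{\CC T}.
\]
By Terracini's Lemma for the Veronese $\nu_d(\PP^{n-1})$, each summand lies in the affine tangent space $v_j^{\otimes(d-1)}\cdot\CC^n$ to $\nu_d(\PP^{n-1})$ at $v_j^{\otimes d}$. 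Under the Alexander--Hirschowitz non-defectivity condition, valid for $d \geq 3$ outside a short known list of triples, these $r$ tangent subspaces are in direct sum in ${\rm Sym}_d(\CC^n)$. Then a kernel element $(W,[\mu])$ satisfies $(\mu_j - \alpha\lambda_j) v_j + d\lambda_j w_j = 0$ termwise for some scalar $\alpha \in \CC$. Pairing with $v_j$ and invoking $v_j^T w_j = 0$ yields $\mu_j = \alpha\lambda_j$, so $[\mu] = 0$ in $\CC^r/\langle\lambda\rangle$, and then $w_j = 0$ for each $j$ with $\lambda_j \neq 0$. Hence $d\phi$ is injective. The second regime is handled symmetrically: one shows that $\sum_j v_j^{\otimes(d-1)}\cdot\CC^n$ spans ${\rm Sym}_d(\CC^n)$ at a generic $V \in \mathcal{F}_{r,n}$, making $\phi$ dominant.

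The main obstacle is that the Terracini direct sum (or spanning) condition must be verified at a generic point of $\mathcal{F}_{r,n}$, not at a generic point of $(\PP^{n-1})^r$: a priori the funtf variety could lie entirely in the closed Terracini-bad locus. The natural route is to exhibit one explicit funtf $V_0$ --- obtained, for instance, from the Strawn parametrization (\ref{eq:strawpara1})--(\ref{eq:strawpara3}) or from the configuration in Example~\ref{ex:opening} --- for which the direct sum or spanning property is confirmed by a rank computation, after which Zariski upper-semicontinuity of the kernel extends the conclusion to a dense open subset of $\mathcal{F}_{r,n}$. A secondary obstacle is the finite list of Alexander--Hirschowitz exceptional triples $(d,n,r)$ where the ambient secant variety is already defective; each such triple would require a separate direct determination of $\dim \mathcal{T}_{r,n,d}$, possibly via the numerical methods of Section~\ref{sec5}.
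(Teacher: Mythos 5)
The paper does not prove this statement; for $n \geq 3$ it is left genuinely open, with only numerical evidence via Theorem~\ref{thm:sec4main} and Table~\ref{tab:fradeco} (the case $n=2$ is settled by Lemma~\ref{lem:fradim}). So there is no paper proof to compare against. Your proposal is a reasonable sketch, and you correctly flag the main delicacy --- that Terracini-generality must be checked on the funtf variety itself, not merely at a generic point of $(\PP^{n-1})^r$.

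However, there is a gap your dichotomy does not cover. Your termwise-vanishing step requires the tangent spaces $v_j^{\otimes(d-1)}\cdot\CC^n$, $j=1,\dots,r$, to be in direct sum, which forces $rn \le \binom{n+d-1}{d}$. But the first term of (\ref{eq:expdim}) simplifies to $nr - \binom{n+1}{2}$, so the regime in which it is the minimum extends all the way to $nr \le \binom{n+d-1}{d} + \binom{n+1}{2} - 1$. In the window $\binom{n+d-1}{d} < nr \le \binom{n+d-1}{d} + \binom{n+1}{2} - 1$ the Veronese tangent spaces provably overlap by a dimension count --- this is independent of the Alexander--Hirschowitz exceptional list --- and one cannot pass from $\sum_j v_j^{\otimes(d-1)}(\mu_j v_j + d\lambda_j w_j) \equiv 0 \pmod{\CC T}$ to the vanishing of each summand. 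The paper's own table contains such cases: $\mathcal{T}_{6,3,4}$ has $rn = 18 > 15 = \binom{6}{4}$ yet $\dim = 12 < 14$, and $\mathcal{T}_{8,3,5}$ has $rn = 24 > 21 = \binom{7}{5}$ yet $\dim = 18 < 20$. These fall into neither your injective regime nor your dominant regime (the source is strictly smaller than the ambient space, so $\phi$ cannot be dominant). Here one would have to argue that the $\binom{n+1}{2}+(r-1)$ linear constraints on $(W,\mu)$ imposed by $T_V\mathcal{F}_{r,n} \times T_{[\lambda]}\PP^{r-1}$ eliminate precisely the kernel created by the Veronese overlap --- a statement that no longer reduces to Terracini for the Veronese and needs a genuinely new idea.
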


This conjecture is subtler than it may seem.
Let $\sigma_r \nu_d \PP^{n-1}$ denote the Zariski closure of the set of tensors of rank $\leq r$ in 
$\PP({\rm Sym}_d(\CC^n))$. Geometrically,
this is the $r$-th secant variety of the $d$-th Veronese
embedding of $\PP^{n-1}$. It is known that $\sigma_r \nu_d \PP^{n-1}$
 has the expected dimension in almost all cases.
The Alexander-Hirschowitz Theorem (cf.~\cite{BrOt, Lan}) states that,
assuming $d \geq 3$, the dimension of $\sigma_r \nu_d \PP^{n-1}$
 is lower than expected in precisely four cases:
\begin{equation}
\label{eq:AH_list}
 (r,n,d) \,\, \in \,\, \bigl\{
(5,3,4), \,
(7,5,3),\,
(9,4,4),\,
(14,5,4) \bigr\}.
\end{equation}
One might think that in these cases also the fradeco subvariety
$\mathcal{T}_{r,n,d}$ has lower than expected dimension. However,
the results summarized in Theorem 
\ref{thm:sec4main} suggest that this is not the case.

\begin{table}[h]
\[
\begin{array}{|c|c|c|c|c|}
 \hline
\textrm{variety} & \textrm{dim} & \textrm{codim} &\textrm{degree}  & \textrm{known equations} \\ \hline
\mathcal{T}_{4,3,3} & 6 & 3 &  17 &  \textrm{$3$ cubics, $6$ quartics} \\ \hline
\mathcal{T}_{4,3,4} & 6 & 8 & 74 &  \textrm{$6$ quadrics, $37$ cubics} \\ \hline
\mathcal{T}_{4,3,5} &  6 & 14  & 191 &  \textrm{$27$ quadrics, $104$ cubics}  \\  \hline
\mathcal{T}_{5,3,4} & 9 & 5 &  210 &   \textrm{$1$ cubic, $6$ quartics} \\ \hline
\mathcal{T}_{5,3,5} & 9 & 11 & 1479 & \textrm{$20$ cubics, $213$ quartics}  \\ \hline
\mathcal{T}_{6,3,4} & 12 & 2 &  99  & \textrm{none in degree $\leq 5$} \\ \hline
\mathcal{T}_{6,3,5} & 12 & 8 &  4269 &\textrm{one quartic} \\ \hline
\mathcal{T}_{7,3,5} &  15 &  5 & \geq 38541 &  \textrm{none in degree $\leq 4$}   \\  \hline
\mathcal{T}_{8,3,5} &  18 &  2  & 690  &     \textrm{none in degree $\leq 5$}\\  \hline
\mathcal{T}_{10,3,6} & 24 & 3  & \geq 16252 &  \textrm{none in degree $\leq 7$}  \\  \hline
\mathcal{T}_{5,4,3} & 10 & 9 &  830 & \textrm{none in degree $\leq 4$} \\ \hline
\mathcal{T}_{6,4,3} & 14 & 5 & 1860 &  \textrm{none in degree $\leq 3$} \\ \hline
\mathcal{T}_{7,4,3} & 18 & 1 & 194 & \textrm{one in degree $194$} \\ \hline
\end{array}
\]
\caption{\label{tab:fradeco} A census of small fradeco varieties}
\end{table}

\begin{theorem} \label{thm:sec4main}
Consider the fradeco varieties $\mathcal{T}_{r,n,d}$
 in the cases when $n \geq 3$ and $\,1 \leq {\rm dim}(\mathcal{T}_{r,n,d} ) \cdot 
{\rm codim}(\mathcal{T}_{r,n,d}) \leq 100 $.
Table \ref{tab:fradeco} gives their degrees and some defining polynomials.
The last column shows the minimal generators of lowest possible degrees
in the ideal of $\mathcal{T}_{r,n,d}$.
\end{theorem}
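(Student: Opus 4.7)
The plan is to verify each row of Table \ref{tab:fradeco} individually by combining the rational parametrizations of Section \ref{sec2} with symbolic linear algebra and numerical algebraic geometry. For each triple $(r,n,d)$ of interest, I would first compose the parametrization of $\mathcal{F}_{r,n}$ given by (\ref{eq:strawpara1})--(\ref{eq:strawpara3}) with the multiplication map $\mathcal{F}_{r,n}\times\PP^{r-1}\to\mathcal{T}_{r,n,d}$ of (\ref{eq:hopefullybirational}) to obtain a single explicit rational map from an affine parameter space onto the fradeco variety. This map serves simultaneously as a mechanism for sampling random points of $\mathcal{T}_{r,n,d}$ and as the source of an easily evaluable Jacobian.

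To populate the dimension column, I would evaluate this Jacobian at a generic complex point and compare its rank with the upper bound (\ref{eq:expdim}). In every row of the table the two numbers agree, which simultaneously confirms the dimension claim for that row and verifies Conjecture \ref{conj:dim} in the listed cases. For the degrees, I would turn to numerical algebraic geometry as implemented in \texttt{Bertini}: cut $\mathcal{T}_{r,n,d}$ by a generic linear subspace $L\subset\PP({\rm Sym}_d(\CC^n))$ of complementary dimension, track homotopy paths through the parametrization to build a witness set for $\mathcal{T}_{r,n,d}\cap L$, and apply a trace test to certify completeness. The cardinality of a certified witness set is $\deg\mathcal{T}_{r,n,d}$; when the trace test does not close, only a lower bound is reported, which explains the two entries marked $\geq$.

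For the last column I would generate a large pool of sample tensors $T^{(1)},T^{(2)},\ldots$ via the parametrization and, for each target degree $k$, assemble the matrix whose columns are indexed by degree-$k$ monomials in the $t_I$ and whose rows record their values at the samples. The kernel of this matrix (once enough samples are used to stabilize it) consists of all degree-$k$ forms vanishing on $\mathcal{T}_{r,n,d}$; quotienting out the image of the ideal generated by equations already produced in lower degrees then exposes the new minimal generators in degree $k$. The claim that no generators exist in strictly smaller degree amounts to verifying that these same kernels are trivial for all smaller values of $k$.

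The principal obstacle throughout is pure computational scale. The ambient projective space grows rapidly with the parameters: $\mathcal{T}_{10,3,6}$ already sits in $\PP^{27}$, so the space of degree-$k$ forms has dimension $\binom{27+k}{k}$ and the sampling-and-kernel step becomes unwieldy for even moderate $k$. The same combinatorial explosion makes path tracking expensive for the larger varieties. This is precisely why the census is restricted by $\,{\rm dim}\cdot{\rm codim}\leq 100$, why the degrees of $\mathcal{T}_{7,3,5}$ and $\mathcal{T}_{10,3,6}$ are only bounded below, and why the search for minimal generators is cut off at the degrees recorded in the last column.
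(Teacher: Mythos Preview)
Your proposal is essentially the same computational strategy the paper uses: Jacobian rank at a generic point for the dimension, numerical algebraic geometry (monodromy loops in {\tt Bertini}, optionally certified by a trace test) for the degree, and sampling followed by kernel computations for the Hilbert function and minimal generators, with exact arithmetic used to nail down the explicit polynomials.

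The one ingredient you omit is the a~priori lower bound of Corollary~\ref{prop:bound}: every nonzero element of the ideal of $\mathcal{T}_{r,n,d}$ has degree at least $r-n+1$. The paper invokes this bound, together with the numerical Hilbert function, to justify the ``none in degree $\leq k$'' entries. In particular, for $\mathcal{T}_{10,3,6}$ one has $r-n+1=8$, so the entry ``none in degree $\leq 7$'' follows from the corollary alone; your purely numerical plan would require ruling out equations in each degree up to~$7$ inside $\PP^{27}$, which is exactly the combinatorial blow-up you flag as the main obstacle. Using Corollary~\ref{prop:bound} sidesteps that and makes the last column feasible in the harder rows.
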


\begin{proof}[Computational Proof]
The dimensions are consistent with Conjecture~\ref{conj:dim}.
They were verified by computing tangent spaces
at a generic point
using {\tt Bertini} and {\tt Matlab}. The degrees were computed 
with the monodromy loop method described in Subsection \ref{subsub1}.
The numerical Hilbert function  method in Subsection \ref{subsub2} was used to 
determine how many polynomials of a given degree vanish on $\mathcal{T}_{r,n,d}$.
This was followed up with computations in exact arithmetic in
{\tt Maple} and {\tt Macaulay2}. These confirmed the earlier numerical results,
and they enabled us to find the explicit polynomials
in $\QQ[T]$ that are listed in Examples
\ref{ex:T433}, \ref{ex:T434} and \ref{ex:T534}.  In the cases where we report no equations occurring below a certain degree, this is a combination of Corollary~\ref{prop:bound} and the numerical Hilbert function computation. 
\end{proof}

We shall now discuss some of the cases appearing
in Theorem \ref{thm:sec4main} in more detail.

\begin{example} \label{ex:T433} \rm
The $6$-dimensional variety $\mathcal{T}_{4,3,3} \subset
\mathbb{P}^9$ has the parametrization
\begin{equation}
\label{eq:433para}
 \begin{matrix}
t_{300} & = & v_{11}^3+v_{12}^3+v_{13}^3+v_{14}^3, \\ 
t_{030} & = & v_{21}^3+v_{22}^3+v_{23}^3+v_{24}^3 ,\\ 
t_{003} & = & v_{31}^3+v_{32}^3+v_{33}^3+v_{34}^3 ,\\ 
t_{012} & = & v_{21}v_{31}^2 +  v_{22}v_{32}^2 +  v_{23}v_{33}^2 +  v_{24} v_{34}^2 ,\\ 
t_{021} & = & v_{21}^2 v_{31} + v_{22}^2v_{32} +  v_{23}^2v_{33} + v_{24}^2 v_{34} ,\\ 
t_{102} & = & v_{11} v_{31}^2 + v_{12}v_{32}^2 + v_{13}v_{33}^2+ v_{14}v_{34}^2 ,\\ 
t_{120} & = & v_{11}v_{21}^2  + v_{12}v_{22}^2  + v_{13}v_{23}^2  +  v_{14}v_{24}^2 ,\\ 
t_{201} & = & v_{11}^2 v_{31} + v_{12}^2 v_{32} + v_{13}^2 v_{33} + v_{14}^2 v_{34}, \\ 
t_{210} & = & v_{11}^2 v_{21} + v_{12}^2 v_{22} + v_{13}^2 v_{23} + v_{14}^2 v_{24} ,\\ 
t_{111} & = & v_{11} v_{21} v_{31} + v_{12} v_{22} v_{32} 
+ v_{13} v_{23} v_{33} +  v_{14} v_{24} v_{34} .\\ 
\end{matrix}
\end{equation}
Here the matrix $V= (v_{ij})$ is given by the parametrization of
$\mathcal{G}_{4,3}$ seen in   (\ref{eq:ex434c}) of Example \ref{ex:opening}.

Using exact linear algebra in {\tt Maple}, we find that the ideal of $\mathcal{T}_{4,3,3}$
contains no quadrics, but it contains  three linearly independent  cubics and $36$ quartics.
One of the cubics is
\begin{equation}
\label{eq:april23}
  C_{123} + 2C_{145} + 2C_{345} - C_{126} - C_{236} - 4 C_{456},
\end{equation}
where $C_{ijk}$ denotes the determinant  of the $3\times 3$ submatrix with columns
$i,j,k$ in
$$
C = \begin{pmatrix}t_{300} & t_{210} &  t_{120} & t_{201} &  t_{111} & t_{102}\\
    t_{210} &  t_{120} &  t_{030} & t_{111} &  t_{021} &  t_{012}\\
    t_{201} &  t_{111} &  t_{021} & t_{102} & t_{012} & t_{003}\end{pmatrix}. $$
The other two cubics are obtained from this one by permuting the indices.
The resulting three cubics define a complete intersection in $\mathbb{P}^9$.
However, that complete intersection strictly contains $\mathcal{T}_{4,3,3}$
because the three cubics have only $30$ multiples in degree $4$, whereas 
we know that  $36$ quartics vanish on $\mathcal{T}_{4,3,3}$.
Using {\tt Macaulay2}, we  identified six minimal ideal generators in degree $4$,
and we found that the nine known generators
generate a Cohen-Macaulay ideal of codimension $3$ and 
degree $17$. Using {\tt Bertini}, we independently verified that
fradeco variety $\mathcal{T}_{4,3,3}$ has degree $17$. 
This implies that we have found the  correct prime ideal.
\hfill $\diamondsuit $
\end{example}

\begin{example}  \label{ex:T434} \rm
The variety $\mathcal{T}_{4,3,4}$ is also $6$-dimensional, and it lives
in the $\PP^{14}$ of ternary quartics. The parametrization is
as in (\ref{eq:433para}) but with quartic monomials instead of cubic.
Among the ideal generators for $\mathcal{T}_{4,3,4}$
are six quadrics and $37$ cubics. One of the quadrics is
$$
\begin{matrix}
8(t_{013}^2{-}t_{004} t_{022})
+8(t_{031}^2{-}t_{022} t_{040})
+8(t_{211}^2{-} t_{202} t_{220})
+18(t_{112}^2{-}t_{103} t_{121})
+18(t_{121}^2{-}t_{112} t_{130})   \\
+ (t_{004} t_{040}{+}19 t_{022}^2 {-}20 t_{013} t_{031})
+ (t_{004} t_{220}{+}t_{022} t_{202} {-}2 t_{013} t_{211})
+ (t_{040} t_{202}{+}t_{022} t_{220}  {-}2 t_{031} t_{211}).
\end{matrix}
$$
A {\tt Bertini} computation suggests that the known generators
suffice to cut out $\mathcal{T}_{4,3,4}$. We also note that
the $27$ quadrics for $\mathcal{T}_{4,3,5}$ come
from the $6$ quadrics for $\mathcal{T}_{4,3,4}$. For instance,
replacing each variable $t_{ijk}$ by $t_{i,j,k+1}$ yields the quadric
$8 t_{014}^2+8 t_{032}^2 + \cdots +19 t_{023}^2$
for $\mathcal{T}_{4,3,5}$.~$\diamondsuit $
\end{example}

\begin{example} \label{ex:T534} \rm
The fradeco variety $\mathcal{T}_{5,3,4}$ is especially
interesting because $(5,3,4)$ appears on the Alexander-Hirschowitz list
(\ref{eq:AH_list}). The unique cubic that vanishes on $\mathcal{T}_{5,3,4}$ is
$$ \begin{matrix}
46 t_{022} t_{202} t_{220}
+73 t_{112} t_{121} t_{211}
-4 t_{004} t_{040} t_{400}
+19 [t_{013} t_{130} t_{301}]_2
-50 [t_{004} t_{112}^2]_3
-22 [t_{004} t_{220}^2]_3 \\
-18 [t_{022} t_{211}^2]_3
+50 [t_{004} t_{022} t_{202}]_3
+26 [t_{004} t_{130} t_{310}]_3
+100 [t_{013} t_{103} t_{112}]_3
-53 [t_{013} t_{121} t_{310}]_3 \\
+5 [t_{004} t_{022} t_{400}]_6
-50 [t_{013}^2 t_{202}]_6
-5 [t_{013}^2 t_{220}]_6
+45 [t_{004} t_{031} t_{211}]_6
-40 [t_{022} t_{202}^2]_6
+5 [t_{004} t_{022} t_{220}]_6 \\
+40 [t_{022} t_{112}^2]_6
-5 [t_{004} t_{130}^2]_6
-45 [t_{004} t_{121}^2]_6 
-10 [t_{004} t_{112} t_{130}]_6
{-}45 [t_{013} t_{022} t_{211}]_6
{+}35 [t_{013} t_{031} t_{202}]_6 \\
+10 [t_{013} t_{103} t_{130}]_6
+10 [t_{013} t_{112} t_{121}]_6
-80 [t_{013} t_{112} t_{301}]_6
+80 [t_{013} t_{202} t_{211}]_6
+8 [t_{013} t_{211} t_{220}]_6.
\end{matrix}
$$
This polynomial has $128$ terms:
each bracket denotes an orbit of monomials under the $S_3$-action,
and the subscript is the orbit size. 
 In addition, six fairly large quartics
vanish on $\mathcal{T}_{5,3,4}$.
The seven known generators cut out a
reducible variety of dimension $9$ in $\PP^{14}$.
The fradeco variety $\mathcal{T}_{5,3,4}$ is the unique
top-dimensional component.  But,
using {\tt Bertini}, we found two extraneous
components of dimension $7$.
Their degrees are $120$ and $352$ respectively.
\hfill $\diamondsuit$
\end{example}

We close this section by examining the geometric
interplay between fradeco varieties and secant varieties.
We write $\sigma_r \nu_d \PP^{n-1}$ for the
$r$-th secant variety of the $d$-th Veronese embedding of $\PP^{n-1}$.
This lives in $\PP({\rm Sym}_d(\CC^n))$ and comprises
rank $r$ symmetric tensors (\ref{eq:Trank2}).
The same ambient space contains  the fradeco variety $\mathcal{T}_{r,n,d}$
 and all its secant varieties $\sigma_s \mathcal{T}_{r,n,d}$.

\begin{theorem} \label{thm:secant}
For any $r > n \geq d \geq 2$, we have
\begin{equation}
\label{eq:secant1}
  \sigma_{r-n} \nu_d \PP^{n-1}\, \subset \, \mathcal{T}_{r,n,d} \,\subset\, \sigma_r \nu_d \PP^{n-1}, 
  \end{equation}
and hence $\, \mathcal{T}_{r-n,n,d} \subset \, \mathcal{T}_{r,n,d}\,$
whenever $r \geq 2n$. Also, if $r = r_1 r_2$ with $r_1 \geq 2$ and $r_2 \geq n$, then
\begin{equation}
\label{eq:secant2}
 \sigma_{r_1} \mathcal{T}_{r_2,n,d}\,\, \subseteq \,\, \mathcal{T}_{r,n,d} . 
 \end{equation}
\end{theorem}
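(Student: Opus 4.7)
The plan is to treat the three assertions separately, since they rest on rather different mechanisms. The upper containment $\mathcal{T}_{r,n,d} \subset \sigma_r \nu_d \PP^{n-1}$ is immediate: any element of $\mathcal{T}_{r,n,d}$ is a limit of tensors $\sum_{j=1}^r \lambda_j \mathbf{v}_j^{\otimes d}$ of Waring rank at most $r$, which are exactly the points of $\sigma_r \nu_d \PP^{n-1}$ by definition. For the multiplicative inclusion (\ref{eq:secant2}), the key observation is that horizontal concatenation of funtfs is again a funtf: if $V^{(i)} \in \mathcal{F}_{r_2,n}$ for $i = 1,\ldots,r_1$, then $V = \bigl(V^{(1)} \mid \cdots \mid V^{(r_1)}\bigr) \in \CC^{n \times r}$ satisfies $V V^T = r_1 \cdot (r_2/n) \cdot \mathrm{Id}_n = (r/n)\,\mathrm{Id}_n$ while preserving unit-norm columns, so $V \in \mathcal{F}_{r,n}$. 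Hence any $\sum_{i=1}^{r_1} \mu_i T_i$ with $T_i = \sum_j \lambda_{ij} \mathbf{v}_{ij}^{\otimes d} \in \mathcal{T}_{r_2,n,d}$ is the fradeco tensor $\sum_{i,j} (\mu_i \lambda_{ij}) \mathbf{v}_{ij}^{\otimes d}$ with $r = r_1 r_2$ terms, establishing (\ref{eq:secant2}).

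The main content is the lower containment $\sigma_{r-n} \nu_d \PP^{n-1} \subset \mathcal{T}_{r,n,d}$, which I would attack using the parametrization of $\mathcal{F}_{r,n}$ given by (\ref{eq:strawpara1})--(\ref{eq:strawpara3}). For a generic $T \in \sigma_{r-n} \nu_d \PP^{n-1}$, write $T = \sum_{j=1}^{r-n} \mu_j \mathbf{u}_j^{\otimes d}$ with each $\mathbf{u}_j$ normalized so that $\sum_i u_{ij}^2 = 1$ (the scaling being absorbed into $\mu_j$). Take $W = (\mathbf{u}_1 \mid \cdots \mid \mathbf{u}_{r-n})$ as the free block in the parametrization: for generic such $W$, the matrix $S = (r/n)\,\mathrm{Id}_n - W W^T$ is invertible, and solving (\ref{eq:strawpara3}) produces an $n \times n$ block $U'$ so that $V = (U' \mid W) \in \mathcal{F}_{r,n}$. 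Setting $\lambda_j = 0$ on the first $n$ columns and $\lambda_j = \mu_{j-n}$ on the remaining $r-n$, the fradeco decomposition $\sum_{j=1}^r \lambda_j \mathbf{v}_j^{\otimes d}$ equals $T$, so $T \in \mathcal{T}_{r,n,d}$. Zariski closure then extends the containment to the entire secant variety. The remaining claim $\mathcal{T}_{r-n,n,d} \subset \mathcal{T}_{r,n,d}$ for $r \geq 2n$ follows by chaining $\mathcal{T}_{r-n,n,d} \subset \sigma_{r-n} \nu_d \PP^{n-1} \subset \mathcal{T}_{r,n,d}$, applying the upper containment with $r-n$ in place of $r$ followed by the lower containment just proved.

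The main obstacle is verifying that this construction really sweeps out a Zariski-dense subset of $\sigma_{r-n} \nu_d \PP^{n-1}$. This reduces to three genericity statements: a generic point of the secant admits a decomposition with non-isotropic summands, since the isotropic locus $\{\sum_i u_i^2 = 0\}$ is a proper subvariety of $\CC^n$; for generic $W$ the matrix $S$ is invertible, since $\det S$ is a nonzero polynomial in the entries of $W$; and (\ref{eq:strawpara3}) is solvable over $\CC$ for such $W$, which follows from the iterative solution via quadratic extensions described in the text preceding the parametrization claim. Once these are in place, the image of the constructed map is dense in $\sigma_{r-n} \nu_d \PP^{n-1}$, and Zariski closure completes the argument.
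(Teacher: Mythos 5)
Your proof is correct and follows essentially the same approach as the paper's: the upper containment is definitional, the inclusion (\ref{eq:secant2}) is obtained by horizontally concatenating frames exactly as in the paper, and the lower containment is derived from the free choice of the block $W$ in the parametrization (\ref{eq:strawpara2})--(\ref{eq:strawpara3}) while zeroing out the multipliers on the completion columns. The only stylistic difference is that the paper phrases the key fact as the dominance of the projection $\mathcal{G}_{r,n} \to (\PP^{n-1})^{r-n}$, which sidesteps the normalization bookkeeping you handle via your genericity discussion; both routes are valid and equivalent.
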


\begin{proof} We fix $d$.
The right inclusion in (\ref{eq:secant1}) is immediate from the definition.
For the left inclusion we use the 
parametrization of $\mathcal{F}_{r,n}$ given in
(\ref{eq:strawpara2}) and (\ref{eq:strawpara3}).
The point is that the $(r-n) \times n$-matrix $W$ can be chosen freely.
Equivalently, the projection of $\mathcal{G}_{r,n} \subset (\PP^{n-1})^r$ to any
coordinate subspace $(\PP^{n-1})^{r-n}$ is dominant.
This means that the first $r-n$ summands in (\ref{eq:Trank1})
are arbitrary powers of linear forms, and this establishes the left inclusion
in (\ref{eq:secant1}).

To show the inclusion (\ref{eq:secant2}), we consider
arbitrary frames $V_1,V_2,\ldots,V_{r_1} \in \mathcal{F}_{r_2,n}$.
Then the $n \times r$-matrix $V = (V_1,V_2,\ldots,V_{r_1})$ is
a frame in  $\mathcal{F}_{r,n}$.
Each $V_i$ together with a choice of $\lambda_i \in \RR^{r_2}$ 
determines a point on $\mathcal{T}_{r_2,n,d}$. 
Thus we have $r_1$ points in $\mathcal{T}_{r_2,n,d}$,
and any point on the $\PP^{r_1-1}$ spanned by these
lies in $\mathcal{T}_{r,n,d}$, where it is represented by 
$V$ with $\lambda = (\lambda_1, \ldots,\lambda_{r_1}) \in \RR^r$.
\end{proof}

\begin{example} \rm
Let $n = 2$ and write $H = (t_{i+j})$ for a Hankel matrix of unknowns with 
$r+1$ rows and sufficiently many columns.
The secant variety $\sigma_r \nu_d \PP^{1}$ is defined by the ideal
$I_{r+1}(H)$ of $(r{+}1) \times (r{+}1)$-minors of $H$. The
ideal-theoretic version of (\ref{eq:secant1}) states that
$$ I_{r-1}(H) \,\supset\, I_{r-1}(\mathcal{M}_r) \, \supset \, I_{r+1}(H) .$$
It is instructive to check this.
The left inclusion follows from the Cauchy-Binet Theorem
applied to $\mathcal{M}_r = A \cdot H $ where $A$ is 
the  $(r{-}1) \times (r{+}1)$ integer matrix underlying $M_r$.
\hfill $\diamondsuit$
\end{example}

\begin{remark} \rm
\begin{itemize} \item[(a)] Since concatenations of frames in $\RR^n$ are always frames,
 (\ref{eq:secant2}) generalizes from secant varieties to joins.
 Namely, if $r = r_1+ r_2$, then
  $\mathcal{T}_{r_1,n,d} \star \mathcal{T}_{r_2,n,d} \subset \mathcal{T}_{r,n,d}$. 
  \vspace{-0.1in}
\item[(b)] The inclusion in (\ref{eq:secant2}) is always strict, with one
notable exception: $\, \sigma_2 \mathcal{T}_{2,2,d} \, = \, \mathcal{T}_{4,2,d}$.
\end{itemize}
\end{remark}

Theorem~\ref{thm:secant} implies that
the Veronese variety $\nu_d \PP^{n-1}$ is contained
in the fradeco variety $\mathcal{T}_{r,n,d}$  with $r > n$.
This is illustrated in Example~\ref{ex:T434} where we
wrote the quadric that vanishes on $\mathcal{T}_{4,3,4}$
as a linear combination of the 
 binomials that define $\nu_4 \PP^2 \subset \PP^{14}$.
 The formula (\ref{eq:april23}) shows that this cubic vanishes on $\,\sigma_2 \nu_3 \PP^2$.
 Similarly, we can verify that the cubic in
 Example~\ref{ex:T534} vanishes on $\sigma_2 \nu_4 \PP^2$ by writing
 it as a linear combination of the $3 \times 3$-minors $C_{ijk,lmm}$ of
the $6 \times 6$-catalecticant $C$ matrix in (\ref{eq:sixbysix}). One such 
expression  is
\small \[\begin{matrix}
50C_{012,012}-30C_{012,123}+50C_{012,034}-30C_{012,125}+50C_{012,045}+63C_{012,345}-10C_{013,024}+10C_{013,234}\\
+5C_{013,015}+35C_{013,135}+34C_{013,245}+5C_{023,023}-80C_{023,134}+5C_{023,025}-26C_{023,235}-19C_{023,145}\\
-30C_{123,123}+29C_{123,125}-10C_{123,345}-10C_{014,025}+19C_{014,235}-53C_{014,145}-30C_{024,245} +5C_{034,034}\\
+26C_{034,045}+5C_{034,345}+50C_{134,134}+50C_{134,235}+30C_{134,145}+30C_{234,245}+5C_{015,015}+26C_{015,135}\\
+50C_{015,245}-5C_{025,235}-10C_{025,145}-10C_{125,345}-4C_{035,035}+5C_{135,135}+50C_{135,245}+5C_{235,235}\\
+5C_{045,045}+5C_{045,345}+50C_{245,245}
. \end{matrix}
\]

Theorem~\ref{thm:secant} gives lower bounds on the degrees of the equations defining fradeco varieties:

\begin{corollary}\label{prop:bound}
All non-zero polynomials in the ideal of $\mathcal{T}_{r,n,d}$  must have degree at least $r-n+1$.
\end{corollary}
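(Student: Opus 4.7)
The plan is to combine Theorem~\ref{thm:secant} with a classical non-degeneracy statement about secant ideals. By Theorem~\ref{thm:secant}, we have $\sigma_{r-n}\nu_d \PP^{n-1} \subset \mathcal{T}_{r,n,d}$, so any polynomial vanishing on $\mathcal{T}_{r,n,d}$ also vanishes on the $(r-n)$-th secant variety of the Veronese. It therefore suffices to prove that every non-zero polynomial in the ideal of $\sigma_s \nu_d \PP^{n-1}$ has degree at least $s+1$, and then specialize to $s = r-n$.

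I would deduce this from the following general fact: \emph{if $X \subset \PP^N$ is an irreducible non-degenerate projective variety, then the ideal of $\sigma_s X$ contains no non-zero polynomial of degree $\leq s$}. The Veronese $\nu_d \PP^{n-1}$ is non-degenerate in $\PP({\rm Sym}_d (\CC^n))$ since the $d$-th powers of linear forms span the space of symmetric tensors, so the general fact does apply in our setting.

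The proof of the general fact proceeds by induction on $s$. The base case $s=1$ is immediate: non-degeneracy of $X$ means $X$ spans $\PP^N$, so no non-zero linear form and no non-zero constant can vanish on it. For the inductive step, suppose $f$ is homogeneous of degree $e \leq s$ and vanishes on $\sigma_s X$. Fix $Q \in \sigma_{s-1} X$ and $P \in X$. Since joining $X$ to $\sigma_{s-1} X$ stays inside $\sigma_s X$, the entire line $\{\lambda P + \mu Q\}$ is contained in $\sigma_s X$, and so $f(\lambda P + \mu Q) \equiv 0$ as a binary form of degree $e$ in $(\lambda,\mu)$. Polarizing, the coefficient of $\lambda \mu^{e-1}$ equals (up to a non-zero scalar) the directional derivative $\sum_I P_I \cdot \frac{\partial f}{\partial t_I}(Q)$. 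For fixed $Q$, this is a linear form in $P$ that vanishes on all of $X$; by non-degeneracy it must be identically zero, so $\frac{\partial f}{\partial t_I}(Q) = 0$ for all $I$. Letting $Q$ vary over $\sigma_{s-1}X$, every partial derivative of $f$ vanishes on $\sigma_{s-1} X$. These partials have degree $e-1 \leq s-1$, so by the inductive hypothesis they all vanish identically, forcing $f$ to be a constant, and hence zero.

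The main obstacle is the inductive step: one has to correctly identify the coefficient of $\lambda\mu^{e-1}$ in the expansion of $f(\lambda P + \mu Q)$, and then exploit non-degeneracy of $X$ to kill every partial of $f$. This polarization step is standard, but selecting the particular coefficient that is linear in $P$ is the conceptual heart of the argument. Once the general fact is established, Corollary~\ref{prop:bound} follows instantly by applying it to $X = \nu_d \PP^{n-1}$ with $s = r-n$.
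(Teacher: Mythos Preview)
Your proof is correct and follows the same overall strategy as the paper: both use the inclusion $\sigma_{r-n}\nu_d\PP^{n-1}\subset\mathcal{T}_{r,n,d}$ from Theorem~\ref{thm:secant} and then invoke the fact that the ideal of $\sigma_s\nu_d\PP^{n-1}$ contains nothing non-zero in degree $\leq s$.

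The only difference is in how that secant bound is justified. The paper simply observes that the Veronese is non-degenerate and then cites \cite[Theorem~1.2]{SidmanSullivant} (prolongations) for the statement that no polynomial of degree $\leq r-n$ vanishes on $\sigma_{r-n}\nu_d\PP^{n-1}$. You instead prove the general fact directly, for any non-degenerate $X$, by the classical polarization/induction argument: expand $f(\lambda P+\mu Q)$, pick off the coefficient of $\lambda\mu^{e-1}$ to get $\sum_I P_I\,\partial_I f(Q)$, and use non-degeneracy of $X$ to kill all partials on $\sigma_{s-1}X$. Your argument is self-contained and more elementary; the paper's citation is shorter but relies on an external result. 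Either route is perfectly adequate here.
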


\begin{proof}
The ideal of the Veronese variety $\nu_{d}\PP^{n-1}$ contains no linear forms. It is
 generated by $2\times 2$ minors of catalecticants. A general result on
 secant varieties \cite[Thm.~1.2]{SidmanSullivant} implies
 that the ideal of $\sigma_{r-n}\nu_{d} \PP^{n-1}$ is zero in degree $\leq r-n$. The inclusion 
$  \sigma_{r-n} \nu_d \PP^{n-1}\, \subset \, \mathcal{T}_{r,n,d}$ yields the claim.
\end{proof}

In Table \ref{tab:fradeco} we see that $\mathcal{T}_{4,3,4}$, 
$\mathcal{T}_{4,3,5}$, $\mathcal{T}_{5,3,4}$, $\mathcal{T}_{5,3,5}$ 
and $\mathcal{T}_{6,3,5}$ have their first minimal generators in the lowest possible degrees.
However this is not always the case, as shown dramatically by $\mathcal{T}_{7,4,3}$.

\section{Numerical Recipes}
\label{sec5}

Methods from Numerical Algebraic Geometry (NAG) are useful for studying
the decomposition of tensors into frames. Many of the results on
fradeco varieties $\mathcal{T}_{r,n,d}$ reported in Sections \ref{sec3} and \ref{sec4}
were discovered using NAG. In this section we discuss the relevant methodologies.
Our experiments
 involve a mixture of using \texttt{Bertini} \cite{BertiniSoftware}, \texttt{Macaulay2} \cite{M2}, \texttt{Maple}, and \texttt{Matlab}.  

All algebraic varieties have an {\em implicit representation}, as the solution set to a system of
polynomial equations. Some special varieties admit a {\em parametric representation},  as
the (closure of the) image of a map whose coordinates are rational functions. Having to pass
 back and forth between these two representations is a
ubiquitous task in  computational algebra.

The fradeco variety studied in this paper is given by a mixture of implicit and parametric.
Our point of departure is the implicit representation (\ref{eq:funtf}) of the
funtf variety  $\mathcal F_{r,n}$, or its homogenization
$\mathcal{G}_{r,n}$. Built on  top of that is the
parametrization (\ref{eq:Trank1}) of rank $r$ tensors:
\begin{equation}
\label{eq:representation}
\begin{matrix}
\CC^{n \times r} \times \CC^{r} &&  \mathrm{Sym}_{d}(\CC^{n}) \\
\cup && \cup \\
 \mathcal{F}_{r,n} \times  \CC^{r} &
\overset{\Sigma_{d}}{\longrightarrow}
 & \widehat{\mathcal{T}}_{r,n,d} \\\\
 (V, \lambda ) & \longmapsto & \lambda_{1} {\bf v}_{1}^{ \otimes d} 
 + \lambda_{2} {\bf v}_{2}^{\otimes d} 
 + \cdots + \lambda_{r} {\bf v}_{r}^{\otimes d}
\end{matrix}
\end{equation}
Here, $ \widehat{\mathcal{T}}_{r,n,d}$ denotes the affine cone
over the projective variety $\mathcal{T}_{r,n,d}$.
The input to our {\em decomposition problem} is an arbitrary
symmetric $n {\times} n {\times} \cdots {\times} n$-tensor $T$ and a positive integer~$r$.
The task is to decide whether $T$ lies in $\widehat{\mathcal{T}}_{r,n,d}$, and, if yes, to compute
a preimage $(V, \lambda)$ under the map $\Sigma_d$ in
(\ref{eq:representation}). Any preimage must
satisfy the non-trivial constraint $V \in \mathcal{F}_{r,n}$.

\subsection{Decomposing fradeco tensors}

We discuss three approaches to finding frame decompositions of symmetric tensors.

\subsubsection{Tensor power method}
Our original motivation for this project came from the case $r=n$
of odeco tensors \cite{Rob}. If $T \in \widehat{\mathcal{T}}_{n,n,d}$,
then the {\em tensor power method} of \cite{AGHKT}  reliably reconstructs
the decomposition (\ref{eq:Trank1}) where
$\{{\bf v}_1,\ldots,{\bf v}_n\}$ is an orthonormal basis of $\RR^n$.
The algorithm is to iterate the rational map
 $\nabla T: \PP^{n-1} \dashrightarrow \PP^{n-1}$
given by the gradient vector $\nabla T = (\partial T / \partial x_1, \ldots, \partial T /\partial x_n)$.
This map is regular when the hypersurface $\{T = 0\}$ is smooth.
The fixed points of $\nabla T$ are the {\em eigenvectors} of the tensor $T$.
Their number was given in \cite{CStu}. The punchline is this: if the multipliers  $\lambda_1,\ldots,\lambda_n$ 
in (\ref{eq:Trank1}) are positive, then ${\bf v}_1,\ldots,{\bf v}_n$
are precisely the {\em robust eigenvectors}, i.e.~the attracting fixed points of 
the gradient map $\nabla T$.

This raises the question whether the tensor power method also works
for fradeco tensors. The answer is ``no'' in general, but it is
 ``yes'' in some special cases.
 
 \begin{example} \label{ex:eigenvec} \rm
 Let $n=2,r=4,d=5$ and consider the fradeco quintic
 $$ T \,=\, \alpha x^5 + y^5 + (x+y)^5 + (x-y)^5 \quad \in \,\,\mathcal{T}_{4,2,5}, $$
 where $\alpha > 6 $ is a parameter.
 The eigenvectors of $T$ are the zeros in $\PP^1$ of the binary quintic
 $$ y \frac{\partial T}{\partial x} - x \frac{\partial T}{\partial y} \,\,\,= \,\,\,
   5 y \cdot \biggl(
    (\alpha x-6)x^4 \,+\, \bigl(2xy-\frac{1}{4}y^2 \bigr)^2 \,+\, \frac{31}{16}y^4  \biggr).
  $$
 The point $(1:0)$ is an eigenvector, but
 there are no other real eigenvectors, as the expression is a sum of squares.
    Hence
 the frame decomposition of $T$ cannot be recovered from its eigenvectors.
 \hfill $\diamondsuit$
 \end{example}
 
\begin{example} \label{ex:tensor power} \rm
For any reals $\lambda_1,\lambda_2,\lambda_3,\lambda_4 > 0$
and any integer $d \geq 5$, we consider the tensor
 \begin{equation}
 \label{eq:ex434d}
T \,= \,
  \lambda_1 (-5x_1+x_2+x_3)^d
+ \lambda_2 (x_1-5x_2+x_3)^d
+ \lambda_3 (x_1+x_2-5x_3)^d
+ \lambda_4 (3x_1 + 3x_2 + 3x_3)^d.
\end{equation}
This tensor has precisely four robust eigenvectors, namely
the columns of the matrix $V$ in (\ref{eq:appscaled}).
Hence the frame decomposition of $T$ can be recovered 
by the tensor power method.
\hfill $\diamondsuit$
\end{example}

The following conjecture generalizes this example.

\begin{conjecture} \label{thm:tensorpowerworks}
Let $ r = n+1 < d $ and $T \in \mathcal{T}_{n+1,n,d}$ with
$\lambda_1,\ldots,\lambda_{n+1} > 0$ in (\ref{eq:Trank1}).
Then ${\bf v}_1,\ldots,{\bf v}_{n+1}$
are the robust eigenvectors of $T$, so they are found by the
tensor power method.
\end{conjecture}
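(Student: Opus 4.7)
The plan exploits the rigidity of $r=n+1$ funtfs. By the argument leading to Corollary~\ref{cor:quotientspace}, after suitable sign choices on its columns, any $V\in\mathcal{F}_{n+1,n}$ is the vertex set of a regular simplex: $\|\mathbf{v}_j\|=1$, $\sum_{j=1}^{n+1}\mathbf{v}_j=0$, and $\langle\mathbf{v}_j,\mathbf{v}_k\rangle=-1/n$ for $j\neq k$. All such configurations are ${\rm SO}_n$-equivalent, so one may place the simplex in a convenient position from the start.

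First I would verify that each $\mathbf{v}_k$ is an eigenvector of $T$. A direct calculation gives
\begin{equation*}
\nabla T(\mathbf{v}_k)\;=\;d\lambda_k\mathbf{v}_k\;+\;d(-1/n)^{d-1}\sum_{j\neq k}\lambda_j\mathbf{v}_j,
\end{equation*}
and the simplex identity $\sum_{j\neq k}\mathbf{v}_j=-\mathbf{v}_k$ makes this a scalar multiple of $\mathbf{v}_k$ in the balanced case $\lambda_1=\cdots=\lambda_{n+1}$, where the full $S_{n+1}$ symmetry of the simplex is a symmetry of $T$. I would first prove the conjecture in this symmetric case and then attempt to extend by continuous deformation through the positive orthant of $\lambda$-space.

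To prove robustness in the balanced case, I would linearize the iteration $\phi(\mathbf{x})=\nabla T(\mathbf{x})/\|\nabla T(\mathbf{x})\|$ on $\mathbb{S}^{n-1}$ at $\mathbf{v}_k$ and bound the spectral radius of $D\phi(\mathbf{v}_k)$ restricted to $\mathbf{v}_k^\perp$. In a basis adapted to the simplex, this Jacobian is an explicit matrix whose off-diagonal entries are of size $O((1/n)^{d-2})$, and the hypothesis $d>n+1$ should force its spectral radius strictly below $1$, giving local attraction. This is a quantitative analog of the odeco bound in \cite{AGHKT}. To rule out other robust eigenvectors, I would use the $S_{n+1}$-action on the simplex to partition the fixed-point set of $\phi$ into orbits indexed by the faces (centroids of $k$-faces for each $k$), compare to the total count $\bigl((d-1)^n-1\bigr)/(d-2)$ from \cite{CStu}, and show on each non-vertex orbit that the Jacobian has an expanding direction.

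The main obstacle, in my view, is the passage to unequal weights $\lambda_j$: the formula above shows that $\mathbf{v}_k$ \emph{fails} to be an eigenvector of $T$ whenever the $\lambda_j$ differ, because the residual $\sum_{j\neq k}\lambda_j\mathbf{v}_j$ is generically not parallel to $\mathbf{v}_k$. Resolving this either requires showing that the conjecture must be understood asymptotically (with the $\mathbf{v}_j$ approximating true eigenvectors up to an error $O((1/n)^{d-1})$ that vanishes as $d\to\infty$), or tracking how the attracting fixed points migrate off the simplex vertices as $\lambda$ varies, and showing that the power iteration still uniquely recovers the $\mathbf{v}_j$ from the positions of the attractors. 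Making this perturbative identification rigorous, together with uniqueness of the $n+1$ attracting branches along the deformation from balanced to arbitrary positive weights, is where the bulk of the work should lie.
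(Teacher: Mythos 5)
This statement is labeled a \emph{conjecture} in the paper; the authors provide no proof, only the numerical evidence of Example~\ref{ex:tensor power}. There is therefore no paper argument to compare against. What you have written is also not a proof --- it is a plan with an openly acknowledged gap --- but the most valuable thing in it is the obstruction you identify in your final paragraph, and that obstruction is real. Your gradient computation is correct. After placing $V$ in the simplex normalization ($\|\mathbf{v}_j\|=1$, $\sum_j\mathbf{v}_j=0$, $\langle\mathbf{v}_j,\mathbf{v}_k\rangle=-1/n$), one has
\begin{equation*}
\nabla T(\mathbf{v}_k)\;=\;d\lambda_k\mathbf{v}_k\;+\;d\,(-1/n)^{d-1}\sum_{j\neq k}\lambda_j\mathbf{v}_j,
\end{equation*}
and writing $\lambda_j=\bar\lambda+\epsilon_j$ for $j\neq k$, with $\bar\lambda$ the mean and $\sum_{j\neq k}\epsilon_j=0$, gives $\sum_{j\neq k}\lambda_j\mathbf{v}_j=-\bar\lambda\,\mathbf{v}_k+\sum_{j\neq k}\epsilon_j\mathbf{v}_j$. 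The second summand lies in $\mathbf{v}_k^{\perp}$ (since $\langle\mathbf{v}_k,\mathbf{v}_j\rangle=-1/n$ is constant in $j$) and vanishes only when $\lambda_j=\bar\lambda$ for all $j\neq k$. So for generic positive $\lambda$ the column $\mathbf{v}_k$ is not a fixed point of the projectivized gradient map, hence cannot be a ``robust eigenvector'' in the sense the paper defines (an attracting fixed point of $\nabla T$). As literally written, the conjecture --- and the unqualified ``for any $\lambda_j>0$'' assertion in Example~\ref{ex:tensor power} --- appears to need reformulation, most plausibly along the asymptotic lines you suggest: the $\mathbf{v}_j$ approximate attracting fixed points to accuracy $O(n^{1-d})$, with exactness only when the off-vertex weights are equal.

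The rest of your outline (reduction to the balanced case via $S_{n+1}$-symmetry, a spectral-radius bound for $D\phi(\mathbf{v}_k)|_{\mathbf{v}_k^\perp}$, comparison against the Cartwright--Sturmfels count $\bigl((d-1)^n-1\bigr)/(d-2)$ on the face-orbits) is a plausible route, but it treats only a measure-zero slice of the hypothesis. Turning this into a theorem requires either the quantitative perturbation statement you sketch, or a continuation argument showing the $n+1$ attracting branches persist without bifurcation across the positive $\lambda$-orthant --- exactly where you say the bulk of the work lies. One further caution: for odd $d$ the simplex normalization $\sum_j\mathbf{v}_j=0$ is not innocent, since the $2^{n+1}$ components of $\mathcal{F}_{n+1,n}$ differ by column sign flips, and absorbing a sign into $\lambda_j$ may violate the positivity hypothesis; the conjecture should be checked against that sign ambiguity before any serious proof attempt.
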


Example \ref{ex:eigenvec} shows that 
Conjecture \ref{thm:tensorpowerworks} is false for $r \geq n+2$,
and it suggests that the Tensor Power Method will not work in general.
 We next discuss two alternative approaches.

\subsubsection{Catalecticant method for frames}

The matrices in Theorem \ref{thm:sec3main} furnish
a practical algorithm for the decomposition problem when $n=2$.
This is a variant of Sylvester's  Catalecticant Algorithm,
and it works as follows.

Our input is a binary form $T\in\widehat{\mathcal{T}}_{r,2,n}$.
 We seek to recover the tight frame into which $T$ decomposes. Since we do
 not know  the fradeco rank $r$ in advance, we start with $\mathcal M_3, \mathcal M_4, \mathcal M_5,$ etc. and plug in the coordinates $t_i$ of $T$.  The fradeco rank is 
 the first index $r$ with $\mathcal{M}_r$ rank deficient.
 
 
If the matrix $\mathcal M_r$ is rank deficient, then its rank is at most $r-2$. Let us assume that the rank equals exactly $r-2$. Otherwise $T$ is a singular point (cf.~Remark~\ref{rem:howitrelates}).
Then, up to scaling, we find the unique row vector ${\bf w} \in \RR^{r-1}$ in the left kernel of $\mathcal{M}_r$. By Theorem \ref{thm:sec3main} we know that $\mathcal M_r$ is the product of the matrix $M_r$ and an $(r-1)\times(d-r-1)$ matrix with entries $v_{i1}^{d-r-j+1}v_{i2}^{j-1}$, where $V = (v_{ij}) \in \mathcal{G}_{r,2}$ is the desired frame. Moreover, the matrix $M_r$ has rank $r-2$, so the vector $\bf w$ also lies in the left kernel of $M_r$, i.e. ${\bf w} \cdot M_r = 0$.
Thus,
$$ 0 = {\bf w} \cdot M_r \,\, = \,\, 
\bigl(
 f(v_{11},v_{21}),
 f(v_{12},v_{22}),
\ldots,
 f(v_{1r},v_{2r}) \bigr) ,$$
 where $f(x,y)$ is a binary form of degree $r$.
  The $r$ roots of $f(x,y)$ in $\PP^1$ are the columns of the desired
 $V = (v_{ij}) \in \mathcal{G}_{r,2}$.
Using these $v_{ij}$, the given binary form has the decomposition 
$$ T(x,y) \,\, = \,\,  \sum_{j=1}^r \lambda_j (v_{1j} x  + v_{2j} y)^d , $$
where the multipliers $\lambda_1,\ldots,\lambda_r$ are recovered
by solving a linear system of equations.

\begin{example} \rm
Let $r= 5$ and $d=8$. We illustrate this method for the binary octic
$$ \begin{small} \begin{matrix}
T = (-237{-}896 \alpha) x^8
+8 (65{+}241 \alpha)x^7y-28(16{+}68 \alpha)x^6y^2+56(5{+}31 \alpha )x^5y^3 
+70(2{-}56 \alpha)x^4y^4 \\
+56(-7+193 \alpha)x^3y^5+28(32-716 \alpha)x^2y^6
+8(-115+2671\alpha)x y^7+(435-9968 \alpha) y^8,
\end{matrix}
\end{small}
 $$
 where $\alpha = \sqrt{3} - 2$.
The parenthesized expressions are the coordinates $t_0,\ldots,t_8$. We find
$$\mathcal M_5 \,\,= \,\, 
\begin{pmatrix}-13548\alpha+595 & 3636\alpha-150 & -996\alpha+42 & 348\alpha+18\\
 2092\alpha-94& -548\alpha+26& 100\alpha-22& 148\alpha+50\\
 -2092\alpha+94& 548\alpha-26& -100\alpha+22& -148\alpha-50\\
 996\alpha-30& -348\alpha-6& 396\alpha+90& -1236\alpha-317
 \end{pmatrix}.
$$
This matrix has rank $3$ and its left kernel is the span of the vector
$\mathbf{w} = (0,1,1,0)$.
Therefore, 
\scriptsize
$$0 \,=\, \mathbf w M_5 \,= \,\begin{pmatrix} 0\\ 1\\ 1\\ 0 \end{pmatrix}^{\!\! T} 
\begin{pmatrix} v_{21}^5 + 5v_{11}^5 & v_{22}^5 
+ 5v_{12}^5 & v_{23}^5 + 5v_{13}^5 & v_{24}^5 + 5v_{14}^5 & v_{25}^5 + 5v_{15}^5\\
v_{11}v_{21}^4 - 3v_{11}^3v_{21}^2 & v_{12}v_{22}^4 - 3v_{12}^3v_{22}^2 & v_{13}v_{23}^4 - 
3v_{13}^3v_{23}^2 & v_{14}v_{24}^4 - 3v_{14}^3v_{24}^2 & v_{15}v_{25}^4 - 3v_{15}^3v_{25}^2\\
3v_{11}^2v_{21}^3 - v_{11}^4v_{21} & 3v_{12}^2v_{22}^3 - v_{12}^4v_{22} & 3v_{13}^2v_{23}^3 - v_{13}^4v_{23} & 3v_{14}^2v_{24}^3 - v_{14}^4v_{24} & 3v_{15}^2v_{25}^3 - v_{15}^4v_{25} \\
5v_{11}^3v_{21}^2 + v_{11}^5 & 5v_{12}^3v_{22}^2 + v_{12}^5 & 5v_{13}^3v_{23}^2 
+ v_{13}^5 & 5v_{14}^3 v_{24}^2 + v_{14}^5 & 5v_{15}^3v_{25}^2 + v_{15}^5
\end{pmatrix}.
$$
\normalsize
Hence the five columns of the desired  tight frame $V = (v_{ij})$ are
the distinct zeros in $\PP^1$ of
$$f(v_{1i}, v_{2i}) \,\,= \,\, v_{1i}v_{2i}^4-3v_{1i}^3v_{2i}^2+3v_{1i}^2v_{2i}^3-v_{1i}^4v_{2i} \qquad
\hbox{for $i=1,\ldots,5$.} $$
We find
$$V \,=\, \begin{pmatrix} 1& 0 & 1 & \alpha & 1\\ 
0 & 1 & 1 & 1 & \alpha
\end{pmatrix} \,\, \in\,\mathcal G_{5, 2}.$$
It remains to solve the linear system of nine equations in
$\lambda = (\lambda_1,\ldots,\lambda_5)$ given by
$$T \,\,= \,\, \lambda_1 x^8 + \lambda_2 y^8 + \lambda_3 (x+y)^8  +
\lambda_4 (\alpha x + y)^8 + \lambda_5 (x +\alpha y)^8. $$
The unique solution to this system is
$\lambda_1 = \lambda_2 = \lambda_3 = \lambda_5=1$ and
$\lambda_4 = 1552 + 896 \sqrt 3$.
 \hfill $\diamondsuit$
\end{example}

\subsubsection{Waring-enhanced frame decomposition}

We now examine the decomposition problem for $n \geq 3$.
Since no determinantal representation of $\mathcal{T}_{r,n,d}$ is known,
 a system of equations must be solved
to recover $(V,\lambda)$ from a given tensor in $\widehat{\mathcal{T}}_{r,n,d}$.
In some  special situations, we can approach this by taking advantage of known results
on Waring decompositions. For instance, in Example~\ref{ex:opening} 
the Waring decomposition  is already the frame decomposition.
Example \ref{ex:notcontain}  shows that this is an exceptional situation.

We demonstrate the ``Waring-enhanced'' frame decomposition for the ternary quartic
$$  \begin{matrix}
\sum_{i+j+k=4} \frac{24} {i! j! k!} t_{ijk} x^i y^j z^k \, = \,
 467 x^4{+}152 x^3y{+}1448 x^3 z{+}660 x^2 y^2{-}1488x^2yz{+}4020x^2z^2 
 {+}536xy^3 \\ \qquad \qquad \qquad
 {-}1992 x y^2 z{+}2352 x y z^2{+}944 x z^3{+}227 y^4{-}1000 y^3 z
{+}2148 y^2 z^2{-}1960 y z^3 {+}1267 z^4.
\end{matrix}
$$
Ternary quartics of rank $\leq 5$ form a hypersurface of degree $6$ in  $\PP^{14}$. 
The equation of this hypersurface is
the determinant of the $6 \times 6$ catalecticant matrix $C$.
Here the dimension is one less than expected;
this is the first entry in 
the Alexander-Hirschowitz list (\ref{eq:AH_list}).
For the given quartic,
\begin{equation}
\label{eq:sixbysix}
 C \, = \,
 \begin{small}
 \begin{bmatrix}
 t_{400} &  t_{310} &  t_{301} &  t_{220} &  t_{211} &  t_{202} \\
 t_{310} &  t_{220} &  t_{211} &  t_{130} &  t_{121} &  t_{112} \\
 t_{301} &  t_{211} &  t_{202} &  t_{121} &  t_{112} &  t_{103} \\
 t_{220} &  t_{130} &  t_{121} &  t_{040} &  t_{031} &  t_{022} \\
 t_{211} &  t_{121} &  t_{112} &  t_{031} &  t_{022} &  t_{013} \\
 t_{202} &  t_{112} &  t_{103} &  t_{022} &  t_{013} &  t_{004} 
 \end{bmatrix} \, = \,
 \begin{bmatrix}
 467 & 38 & 362 & 110 & \! -124 & 670 \\
  38 & 110 & \! -124 & 134 & \! -166 & 196 \\
362 & \! -124 & 670 & \! -166 & 196 & 236 \\
 110 & 134 & \! -166 & 227 &\! -250 & 358 \\
\! -124 &\! -166 & 196 & \! -250 & 358 & \! -490 \\
 670 & 196 & 236 & 358 & \! -490 & 1267
\end{bmatrix}\!.
\end{small}
 \end{equation}
 This matrix has rank $5$ and its kernel
 is spanned by the vector corresponding to the quadric
  $\, q = 14 u^2 -  uv  -  2uw -  4v^2 -  11vw - 10w^2 $.
  The points $(u:v:w)$ in $\PP^2$ that lie on 
  the conic $\{q = 0\}$ represent
  all   the linear forms $ux + vy+wz$
  that may appear in a   rank $5$ decomposition.
  
  Our task is to find five points on the conic $\{q = 0\}$ that
  form a frame $V \in \mathcal{G}_{5,3}$.
  This translates into solving a rather challenging system of polynomial
  equations. One of the solutions is
  $$
V \,\,= \,\, ({\bf v}_1,{\bf v}_2,{\bf v}_3,{\bf v}_4,{\bf v}_5) \,\, = \,\,
 \begin{pmatrix}
 -1 &   \,\,2\,  &  \phantom{-}2 &  \phantom{-}1 + 2 \sqrt{3} &   -1 + 2 \sqrt{3}\, \\
 \phantom{-} 2  &  \,\, 2  \, &  -1 & -2 + \sqrt{3}    &   2 + \sqrt{3}   \\
 \phantom{-} 0 & \,\, 1  \, &  -2 &      5               &        -5 
   \end{pmatrix}.
$$
The given ternary quartic has the frame decomposition  $\, 
{\bf v}_1^{\otimes 4} + 
{\bf v}_2^{\otimes 4} + 
{\bf v}_3^{\otimes 4} + 
{\bf v}_4^{\otimes 4} + 
{\bf v}_5^{\otimes 4} $.

\subsection{Exploring the fradeco variety}

The following tasks make sense for any variety $X \subset \PP^{N}$
arising in an applied context: 
(i) sample points on $X$, (ii) compute the dimension and degree of $X$, 
(iii) compute an irreducible decomposition of $X$, 
(iv) find a parametrization of $X$, (v) find some polynomials that vanish on $X$, 
(vi) determine polynomials that cut out $X$, (vii) find generators for the ideal of $X$.
Numerical algebraic geometry (NAG) furnishes tools for addressing these points.
In our study, $X$ is the fradeco variety $\mathcal{T}_{r,n,d}$.
We used NAG to find answers in some cases.
In what follows, we explain our computations. Particular
emphasis is placed on the results  reported in Section \ref{sec4}
for the degree and Hilbert function of $\mathcal{T}_{r,n,d}$.
All computations are carried out by working on the affine cone
$\widehat{\mathcal{T}}_{r,n,d} \subset {\rm Sym}_d(\CC^n)$.

\subsubsection{Dimension and degree}
\label{subsub1}

The dimension and degree of the affine variety $\widehat{\mathcal{T}}_{r,n,d}$
can be computed directly from the  mixed parametric-implicit 
representation in (\ref{eq:representation}).
The dimension can be found by selecting
a random point on $\mathcal{F}_{r,n} \times \RR^r$,
determining its tangent space via \cite{Str}, and then
taking the image of this tangent space via
the derivative of the map $\Sigma_d$.
The image is a linear subspace in ${\rm Sym}_d(\RR^n)$, and 
its dimension is found via the rank of its defining  matrix.
These matrices are usually given numerically, 
in terms of  points sampled from $\mathcal{F}_{r,n} $,
so we need to use singular value decompositions.

The computation of the degree is carried out using {\em monodromy}.
We obtained the results of Theorem~\ref{thm:sec4main} by
applying essentially the same technique as in \cite{HIS, HOOS}, adapted to our situation where the mapping is from an implicitly defined source. Here are some highlights of this method for 
$\widehat{\mathcal{T}}_{r,n,d}$. We performed these computations using \texttt{Bertini} and \texttt{MatLab}.

Let $c$ denote the codimension of $\widehat{\mathcal{T}}_{r,n,d}$,
as given by the formula in Conjecture~\ref{conj:dim}.
The degree of $\widehat{\mathcal{T}}_{r,n,d}$ is 
 the number  of points in the intersection with a
 random $c$-dimensional affine subspace of ${\rm Sym}_d(\CC^n)$.
 Here we represent the fradeco variety purely numerically, namely
 as the set of images of points $(V,\lambda)$ under
 the parametrization $\Sigma_{d}$ shown in \eqref{eq:representation}.
   This method verifies the dimension of $\widehat{\mathcal{T}}_{r,n,d}$ because 
   the intersection would be empty if the dimension were lower than expected.

As a first step, we compute a numerical irreducible decomposition of
the funtf variety~$\mathcal F_{r,n}$.
This also gives its degree and dimension, as shown in Table \ref{tab:one}.
In particular, we obtain degree-many points of $\mathcal{F}_{r,n}$ that lie in a random linear space of dimension equal to $\textrm{codim}( \mathcal{F}_{r,n})$.

We take $V$ to be one of these generic points in $ \mathcal{F}_{r,n}$,
we select a  random vector $\lambda \in \CC^r$, and we compute
the fradeco tensor $\Sigma_d(V,\lambda)$.
We also fix a random $c$-dimensional linear subspace
$\mathcal{R}$ of $\mathrm{Sym}_{d}(\CC^{n})$ and a
random point $U$ in the $c$-dimensional
affine space $\mathcal{R}+U$.

By construction, the affine cone
$\widehat{\mathcal{T}}_{r,n,d}$  and the
affine space $\mathcal{R}+U$ intersect in
 ${\rm deg}(\widehat{\mathcal{T}}_{r,n,d})$ many points in 
$\mathrm{Sym}_{d}(\CC^{n})$.
One of these points is  $\Sigma_d(V,\lambda)$.
Our goal is to discover all the other intersection points by sequences of
parameter homotopies that form monodromy loops.
Geometrically, the base space for these monodromies
is the vector space quotient  $\mathrm{Sym}_{d}(\CC^{n})/R$.

We fix two further random points $P_1$ and $P_2$ in 
$\mathrm{Sym}_{d}(\CC^{n})$. These represent residue classes modulo
the linear subspace $\mathcal{R}$.  The data we fixed now define a
 (triangular) monodromy loop
 \[\xymatrix{ &(\mathcal{R} + U) \cap \widehat{\mathcal{T}}_{r,n,d}  \ar[dr]
  \\ (\mathcal{R} + P_2) \cap \widehat{\mathcal{T}}_{r,n,d} \ar[ur]&&
  (\mathcal{R} +  P_1) \cap \widehat{\mathcal{T}}_{r,n,d} \ar[ll] }\]
 We use \texttt{Bertini} to perform each linear parameter homotopy.
This constructs a path  $(V_t,\lambda_t)$ in the parameter space.
Here $t$ runs from $0$ to $3$. We start at $(V_0,\lambda_0) = (V,\lambda)$,
the point $\Sigma_d(V_i,\lambda_i)$ lies in 
$(\mathcal{R} +  P_i) \cap \widehat{\mathcal{T}}_{r,n,d} $ for $i=1,2$,
and $\Sigma_d(V_3,\lambda_3)$ is back in
$(\mathcal{R} + U) \cap \widehat{\mathcal{T}}_{r,n,d}$.
With high probability,
$\Sigma_d(V_3,\lambda_3) \not= \Sigma_d(V,\lambda)$ holds,
and we have discovered a new point.
Then we iterate the process. 
Let   $\,S_{k}:= \{\Sigma_{d}(V,\lambda),\dots, \Sigma_{d}(V',\lambda')\}\,$
denote the subset of
$(\mathcal{R} + U) \cap \widehat{\mathcal{T}}_{r,n,d}$
that has been found after $k$ steps.
In the next monodromy loop we trace the paths of
 $S_{k}$ to produce $\tilde S_{k+1}$, the endpoints of  monodromy loops starting from $S_{k}$.
 Using \texttt{MatLab}, we then merge the point sets to form $S_{k+1} = S_{k} \cup \tilde S_{k+1}$.
 We repeat this process until no new points are found after 20 consecutive monodromy loops.  The 
 number of points in $S_{k}$ is   very strong numerical evidence for the degree of~$\mathcal{T}_{r,n,d}$.
At this point, one can also use the trace test \cite{SVW}
 with pseudowitness sets \cite{HS} to confirm that degree.
 
\subsubsection{Numerical Hilbert Function}
 \label{subsub2}

We wish to learn the polynomial equations that vanish on $\mathcal{T}_{r,n,d}$.
The set $I$ of all such polynomials is a homogeneous prime ideal
in the polynomial ring over $\QQ$ whose variables are the entries 
$t_{i_1 i_2 \cdots i_d}$ of an indeterminate tensor $T$.
We write this polynomial ring as 
$$ \QQ[T]  \,=\,  \bigoplus_{e \geq 0} \QQ[T]_e \quad \simeq \quad  {\rm Sym}_e ({\rm Sym}_d(\QQ^n)) \,=\,
\bigoplus_{e \geq 0}   {\rm Sym}_* ({\rm Sym}_d(\QQ^n)). $$
The space of all polynomials of degree $e $ in the ideal $I$ is the subspace
$$ I_e \,= \,I \cap \QQ[T]_e \quad \subset \quad \QQ[T]_e  \, \simeq \,  {\rm Sym}_e ({\rm Sym}_d(\QQ^n)) .$$
A natural approach is to fix some small degree $e$ and to ask for a 
$\QQ$-linear basis of $I_e$.

The dimensions of these vector spaces are organized into the {\em Hilbert function}
$$ 
\mathbb{N} \,\rightarrow \,\mathbb{N}, \,\, e \,\mapsto \,  {\rm dim}_\QQ(I_e). 
$$
We used {\tt Bertini} and {\tt Matlab} to 
determine specific values  of the Hilbert function. In some cases, an independent
{\tt Maple} computation was used to construct a basis for the $\mathbb{Q}$-vector space $I_e$.

Fix  values for $r, n, d $. As discussed above,
we can use the parametrization (\ref{eq:representation})
to produce many sample points $T = \Sigma_d(V,\lambda)$ on $\mathcal{T}_{r,n,d}$.
The condition $f(T) = 0$ translates into a linear equation in 
the coefficients of a given polynomial $f \in \QQ[T]_e$, and
$I_e$ is the solution space to these equations as $T$ ranges over $\mathcal{T}_{r,n,d}$.
We write these linear equations as a matrix whose
number of columns is
${\rm dim}(\QQ[T]_e) = \binom{ \binom{n+d-1}{d} + e-1}{e}$, 
and with one row per sample point $T$.
In practice we take enough sample points so that
$I_e$ is sure to equal the kernel of that matrix.

This procedure may be carried out in exact arithmetic over $\QQ$
when sufficiently many exact points can be found on $\mathcal{F}_{r,n}$.  When floating point approximations are used, some care is required in choosing
the appropriate number of points and a sufficient degree of precision. 
This numerical test can become inconclusive in high dimension due to these issues.
 Using floating point arithmetic and 30,000 points of $\mathcal{F}_{r,n}$ we obtained the 
   values listed in Table~\ref{tab:three}.
The blanks  indicate that we did not find conclusive evidence for the exact value of 
$\dim (I_{e})$ in that case. For 
$\mathcal{T}_{5,4,3}$, $\mathcal{T}_{6,3,4}$, $\mathcal{T}_{6,4,3}$, $\mathcal{T}_{7,3,5}$, and $\mathcal{T}_{8,3,5}$ we also found no conclusive numerical 
evidence for equations in degrees less than~$5$.

\begin{table}
\[\begin{array}{|c|c|c|c|c|c|} \hline
\diaghead{\theadfont ideal\;\; $\deg e$\;\;}{ideal}{$\deg e$}
&       2 & 3 & 4 & 5   & 6  \\ \hline
\dim \mathcal{I}(\mathcal{T}_{5,2,9})_{e} &  0 & 0 & 5 & 46 & 235 \\ \hline
\dim \mathcal{I}(\mathcal{T}_{4,3,4})_{e} &  6 &127 & 1093 & 5986 &\\ \hline
\dim \mathcal{I}(\mathcal{T}_{4,3,5})_{e} &  27 & 651 & 6370 & &\\ \hline
\dim \mathcal{I}(\mathcal{T}_{5,3,4})_{e} &  0 &1 & 21 & &\\ \hline
\dim \mathcal{I}(\mathcal{T}_{5,3,5})_{e} &  0 &20 & 633 & &\\ \hline
\dim \mathcal{I}(\mathcal{T}_{6,3,5})_{e} &  0 & 0 & 1  & &\\ \hline
\end{array}
\]
\caption{\label{tab:three}
Numerical computation of the Hilbert functions of fradeco varieties}
\end{table}

The calculation of $\dim (I_{e})$ is a numerical rank computation via singular value decomposition, so at least in principle it is possible to also extract a basis of $I_{e}$. However, in practice, round-off errors yield imprecise values for the coefficients of the basis elements of $I_{e}$.
This makes it difficult to reliably determine an exact $\QQ$-basis of $I_{e}$ by numerical methods.

To discover the explicit ideal generators
displayed in Sections \ref{sec3} and \ref{sec4}, we instead used
exact arithmetic in {\tt Maple}.  A key step was to produce
points in the funtf variety $\mathcal{F}_{r,n}$ that are defined over
low-degree extension of $\mathbb{Q}$,
and to map them carefully via $\Sigma_d$.
To accomplish this, we used the representation of 
$\mathcal{G}_{r,n}$ discussed in Section \ref{sec2}.
In our experiments, we found that the {\tt solve} command
in {\tt Maple} was able to handle dense linear systems with
up to $3,500$ unknowns.

\bigskip
\bigskip

\noindent
{\bf Acknowledgments.}
The authors were hosted by the Simons Institute for the Theory of Computing in Berkeley during
the Fall 2014 program {\em Algorithms and Complexity in Algebraic Geometry}.
We are grateful to Jon Hauenstein for helping us with {\tt Bertini}, and we thank
Aldo Conca, Fatemeh Mohammadi, Giorgio Ottaviani
and Cynthia Vinzant for valuable comments.
Elina~Robeva and Bernd~Sturmfels were supported by
US National Science Foundation grant DMS-1419018.

 \medskip
 
\begin{small}

\bigskip \bigskip

\footnotesize 
\noindent {\bf Authors' addresses:}

\smallskip

\noindent Luke Oeding,
Dept.~of Mathematics, Auburn University,
Auburn, Alabama 36849, USA, {\tt oeding@auburn.edu}

\smallskip

\noindent Elina Robeva and Bernd Sturmfels, Dept.~of Mathematics, University of California, Berkeley, CA 94720-3840, USA,
{\tt erobeva@berkeley.edu, bernd@berkeley.edu}

\end{small}
\end{document}